\newcommand*{\Union}{\bigcup} 
\newcommand{\col}{\text{col}}
\newcommand{\row}{\text{row}}
\newtheorem{Corollary}{Corollary}
\newtheorem{Theorem}{Theorem}
\newtheorem{Lemma}{Lemma}
\newtheorem{Definition}{Definition}
\newtheorem{Proposition}{Proposition}
\newtheorem{Example}{Example}
\newtheorem{Remark}{Remark}
\author{Sivasankar M\affiliationmark{1}\thanks{Corresponding Author}
  \and Rama R\affiliationmark{1}}
\title[Factors of Fibonacci Sequences of $2D$ words]{Fibonacci Sequences of $1D$,$2D$ Words: Enumerating and Locating the Factors of the Fixed Points}
\affiliation{
  Department of Mathematics, Indian Institute of Technology Madras, Chennai, India}
\begin{document}
\publicationdetails{}{}{}{}{}
\maketitle

\begin{abstract}
Given an infinite word,  enumerating its factors is an important exercise for understanding the structure of the word. The process of finding all the factors is quite tricky for two-dimensional words. In this paper, two possible ways of enumerating the factors of the fixed point ($f_{\infty,\infty}$) of the sequence of Fibonacci arrays and a method for locating these factors in $f_{\infty,\infty}$ are explored. In addition, the factor complexity and the locations of the factors of the fixed point of Fibonacci sequence of arrays are also analysed.  

\keywords{Fibonacci words, Two-dimensional Fibonacci words (Fibonacci arrays), Sequence of Fibonacci Arrays, Fibonacci Sequence of Arrays, Factors, Conjugates of two-dimensional words, Directed Acyclic Word Graph.}
\end{abstract}

\section{Introduction}

Let $w$ be finite/infinite word over an alphabet $\Sigma$. The details about the subwords (otherwise called, the factors) of $w$ would be of considerable use for a better understanding of the structure and characteristics of $w$. The number of factors and the periodic (or primitive) nature of the word $w$ are closely related and are in general analysed simultaneously. Any additional information about the factors of $w$ can help in the factorization/decomposition of $w$. In turn, factorizations like Lyndon, Ziv–Lempel and Crochemore are used in text compression algorithms \cite{Burcroff:2020,Jahannia:2019}. 

Fibonacci words (more generally Sturmian words) are "simple" morphic words. By "simple", we mean that the morphisms defining these words are short and are easily conceivable. Also, it is known that, for infinite words $w$, which are not ultimately periodic, $p_{n}(w) \ge n+1$, where $p_{n}(w)$ is the number of factors of length $n$ of $w$ \cite{Berstel:1986}. It is interesting to note that Sturmian words are a class of aperiodic infinite words that achieve the least possible $p_{n}$ value, namely $n+1$ \cite{Lothaire:2002}.

Generating the Fibonacci words over $\{0,1\}$ can be systematically achieved either by the famous Fibonacci morphism, $\phi(1) = 10$, $\phi(0)=1$ \cite{Lothaire:2002} or by recursive constructions like $f_{0} = 1, f_{1} =10, f_{n} = f_{n-1}f_{n-2}, n \ge 2$ \cite{Chuan:1992,Chuan:1993,Chuan:1995}. Infinite iterations of the Fibonacci morphism or the recursion, generates the infinite Fibonacci word $f_{\infty} = 10110101\ldots$.  Some remarkable properties of $f_n$ and $f_{\infty}$ are:
(i) $f_{\infty}$  contains no fourth power,
(ii) if  a  word $u^2$ is a  factor  of $f_{\infty}$, then $u$ is  a  conjugate  of some finite Fibonacci word,
(iii) The finite Fibonacci words are primitive \cite{Berstel:1986, Luca:1981}. 

With a minimum number of subwords of any particular length, it is no wonder that the subwords occur again and again, at various locations, in $f_{\infty}$ \cite{Chuan:2005, Walczak:2010, Mignosi:1992}. There are a few interesting systematic ways to list these subwords. In \cite{Berstel:1986}, subwords of length $k$ are used to list the subwords of length $k+1$. In \cite{Rytter:2006}, a directed acyclic word graph ($DAWG$) is used to analyse the subwords. In \cite{Chuan:2005}, the suffixes of the conjugates of a specific conjugate of a finite Fibonacci word is used to find all the factors of a given length.  

As a natural extension to the one-dimensional words, two-dimensional words are studied \cite{Giam:1997,Rosenfeld:1979,Siromoney:1973}. We will interchangeably use $1D$ for one-dimensional and $2D$ for two-dimensional, hereafter in this article. Two-dimensional words finding some useful applications in image processing, data compression and crystallography is another push for exploring two-dimensional words. In \cite{Apostolico:2000}, $2D$ Fibonacci words, $f_{m,n}, m,n \ge 0$, are introduced to show that they attain the general upper bound for the number of occurrences of a particular type of tandem. In \cite{Kulkarni:2019, Mahalingam:2018}, a few combinatorial and palindromic properties of $f_{m,n}$ are studied. In \cite{Sivasankar:2023}, the authors obtain $f_{\infty,\infty}$, the $2D$ infinite Fibonacci word, using a $2D$ morphism. Further, they count the number of tandems occurring in it. 

In this paper, we list all the subwords of a given size $(k,l), k,l \ge 1$ of the $2D$ infinite Fibonacci word $f_{\infty,\infty}$. We systematically extend the methods used in \cite{Rytter:2006, Chuan:2005} for finding the subwords of $f_{\infty}$,  to $f_{\infty,\infty}$. In the later part of the paper, by using strings (of length two or more) for symbols in the Fibonacci sequence of arrays, we obtain sequences of Fibonacci arrays, and we investigate the factors of the fixed points of such sequences.

The remaining of the paper is organized as follows. In Section \ref{secpreli}, all the required definitions and notions are elaborated. In Section \ref{secdawg}, a $DAWG$ for $f_{\infty,\infty}$ is constructed and the subwords of $f_{\infty,\infty}$ are enumerated. In Section \ref{secconj}, given a $k \ge 2$ and a $l \ge 2 $, to list all the subwords of size $(k,l)$, the conjugates of a special conjugate of $f_{m,n}$ ($m,n$ depend on $k,l$) are used. In Section \ref{secloc}, the location of the factors of $f_{\infty,\infty}$ are found out. Section \ref{sec_uv_words} analyses the factors and the locations of the factors of the fixed points of the Fibonacci sequences of $1D$ words. Section \ref{sec_2d_uv_words} extends the concepts of Section \ref{sec_uv_words} to two dimensions. Finally, Section \ref{secconcl} has a few concluding remarks.

\section{Preliminaries}\label{secpreli}
\subsection{One-dimensional Words}
In formal language theory, $\Sigma$, an alphabet is a finite set of symbols and $\Sigma^*$ is the free monoid generated by $\Sigma$. The elements of $\Sigma^*$ are called words and are obtained by concatenating symbols from $\Sigma$. The neutral element of $\Sigma^*$ is the empty word (denoted by $\lambda$) and we have $\Sigma^+ = \Sigma^* - \{\lambda\}$. For a word $u \in \Sigma^*$, $|u|$ called the length of the word is the number of letters occurring in $u$. By definition, $|\lambda| = 0$. Given a word $w \in \Sigma^{*}$, $u \in  \Sigma^{*}$ is a prefix (suffix, respectively) of $w$, if $w = uv$ ($w=vu$, respectively) for some $v \in \Sigma^{*}$. The reversal of a word $u=a_{1}a_{2} \cdots a_{n}$, $a_{i} \in \Sigma$, for $1 \leq i \leq n$, is the word $u^{R}=a_{n} \cdots a_{2} a_{1}$. A word $u$ is said to be a palindrome (or a one-dimensional palindrome) if $u=u^{R}$. 

A word $w$ is said to be primitive if $w=u^{n}$ implies $n=1$ and $w=u$. Note that a power of a word is nothing but repeated concatenation of the word with itself. That is $u^n$ is obtained by concatenating $u$ with itself $n$ times. For a detailed study of formal language theory and combinatorics on words, the reader is referred to \cite{Lothaire:1997}.

\subsection{Two-dimensional Words}
The concepts of formal language theory can be obviously extended to two dimensions \cite{Giam:1997}. A two-dimensional word is called a picture or array and is a  rectangular array of symbols taken from $\Sigma$.  

\begin{Definition} \cite{Kulkarni:2019}
A $2D$ word $u=[u_{i,j}]_{1 \leq i \leq m,1 \leq j \leq n}$ of size $(m,n)$ over  $\Sigma$ is a two-dimensional rectangular finite arrangement of letters:
$$u=
\begin{matrix}
u_{1,1} & u_{1,2} & \cdots & u_{1,n-1} & u_{1,n} \\
u_{2,1} & u_{2,2} & \cdots & u_{2,n-1} & u_{2,n} \\
\vdots & \vdots & \ddots & \vdots & \vdots \\
u_{m-1,1} & u_{m-1,2} & \cdots & u_{m-1,n-1} & u_{m-1,n} \\
u_{m,1} & u_{m,2} & \cdots & u_{m,n-1} & u_{m,n} \\
\end{matrix}$$
\end{Definition}

We denote the number of rows and columns of $u$ by $|u|_{\text{row}}$ and $|u|_{\text{col}}$, respectively. An empty array, denoted by $\Lambda$ is an array of size $(0,0)$. Note that the arrays of size $(m,0)$ and $(0,m)$ for $m>0$ are not defined. The set of all arrays over $\Sigma$ including $\Lambda$, is denoted by $\Sigma^{**}$ and $\Sigma^{++}$ will denote the set of all non-empty arrays over $\Sigma$. Any subset of $\Sigma^{**}$ is called a picture language. 

To locate any position or region in an array, we require a reference system \cite{Anselmo:2014}. Given an array $u$, the set of coordinates $\{1,2,\ldots,|u|_{\text{row}}\} \times \{1,2,\ldots,|u|_{\text{col}}\}$ is referred to as the domain of $u$. A subdomain or subarray of an array $u$ (that is, a factor of the $2D$ word $u$), denoted by $u[(i,j),(i',j')]$, is the portion of $u$ located in the region $\{i,i+1,\ldots,i'\} \times \{j,j+1,\ldots,j'\}$, where $1 \leq i \leq i' \leq |u|_{\row},1 \leq j \leq j' \leq |u|_{\col}$. 

Similar to the concatenation operation in one dimension, the column concatenation and the row concatenation operations between two arrays are as follows.

\begin{Definition}
\cite{Giam:1997} Let $u,v$ be arrays over $\Sigma$ of sizes $(m_{1},n_{1})$ and $(m_{2},n_{2})$, respectively with $m_{1},n_{1},m_{2},n_{2}>0$. Then, the column concatenation of $u$ and $v$, denoted by $\obar$, is a partial operation, defined if $m_{1}=m_{2}=m$, and is given by
$$u \obar v=
\begin{matrix}
u_{1,1} & \cdots & u_{1,n_{1}} & v_{1,1} & \cdots & v_{1,n_{2}}  \\
\vdots & & \vdots & \vdots & & \vdots \\
u_{m,1} & \cdots & u_{m,n_{1}} & v_{m,1} & \cdots & v_{m,n_{2}}
\end{matrix}.$$

Similarly, the row concatenation of $u$ and $v$, denoted by $\ominus$, is another partial operation, defined if $n_{1}=n_{2}=n$, and  is given by 
$$u \ominus v=
\begin{matrix}
u_{1,1} & \cdots & u_{1,n}  \\
\vdots & & \vdots \\
u_{m_{1},1} & \cdots & u_{m_{1},n} \\
v_{1,1} & \cdots & v_{1,n} \\
\vdots & & \vdots \\
v_{m_{2},1} & \cdots & v_{m_{2},n}
\end{matrix}.$$

The column and row concatenation of $u$ and the empty array $\Lambda$ are always defined and $\Lambda$ is a neutral element for both the operations.
\end{Definition}

For a $u \in \Sigma^{**}$, an array $v \in \Sigma^{**}$ is said to be a prefix of $u$ $($suffix of $u$, respectively$)$, if $u=(v \ominus x) \obar y$  $(u=y \obar (x \ominus v)$, respectively$)$ for some $x,y \in \Sigma^{**}$. If $x \in \Sigma^{++}$, then by $(x^{k_{1} \obar})^{k_{2} \ominus}$ we mean that the array is constructed by repeating $x$, $k_{1}$ times column-wise and $x^{k_{1} \obar}$, $k_{2}$ times row-wise. An array $w \in \Sigma^{++}$ is said to be 2D \textit{primitive} if $w=(x^{k_{1} \obar})^{k_{2} \ominus}$ implies that $k_{1}k_{2}=1$ and $w=x$ \cite{Gamard:2017}. 
\subsection{Fibonacci Words}

Fibonacci words are closely related with the Fibonacci numbers. Recall the recursive definition of the Fibonacci numerical sequence: $F(0)=1$, $F(1)=2$, $F(n)=F(n-1)+F(n-2)$ for $n \geq 2$. 
Likewise, for $\Sigma=\{a,b\}$, the sequence $\{f_{n}\}_{n \geq 0}$ of Fibonacci words, is defined recursively by $f_{0}=a$, $f_{1}=ab$, $f_{n}=f_{n-1}f_{n-2}$ for $n \geq 2$. First few words of this sequence are: $f_{0}=a, f_{1}=ab, f_{2} = aba, f_{3} = abaab, f_{4} = abaababa$. Note that $|f_{n}|=F(n)$ for $n \geq 0$. The sequence of Fibonacci words can be obtained by iterating the Fibonacci morphism $\phi: \Sigma^* \rightarrow \Sigma^*$ defined by $\phi(a) = ab, \phi(b) = a$. An infinite number of iterations of $\phi$ produces the $1D$ infinite Fibonacci word $f_{\infty}$ \cite{Lothaire:2002}. That is, 
$$\lim_{n \to \infty} \phi^n(b)= f_{\infty} = abaababa\ldots.$$

\begin{Remark}
In the literature, one can observe variations in the definitions of the Fibonacci numbers and the definitions of the Fibonacci words. That is, for the convenience of simplifying the indices used in the proofs, some authors define the Fibonacci number sequence as $F(0)=1$, $F(1)=1$, $F(n)=F(n-1)+F(n-2)$ \: for\: $n \geq 2$ and the sequence of Fibonacci words as $f_0 = b, f_1 = a,
f_n = f_{n-1}f_{n-2}$ for $n \ge 2$. In such a case, the first few words of the sequence will be: $f_0 = b, f_1 = a, f_2 = ab, f_3 =
aba, f_4 = abaab$. But, in any case, the infinite Fibonacci word obtained will be the same. So, in the arguments used in this paper, we might have used the better of the two versions accordingly.    
\end{Remark}

\begin{Remark}
We also use $f_{\infty}^{s_1,s_2}$ to denote the $1D$ infinite Fibonacci word $s_1s_2s_1s_1s_2\ldots$ over the alphabet $\{s_1,s_2\}$. Similarly, $f_{n}^{s_1,s_2}$ denotes the $1D$ finite Fibonacci word $s_1s_2s_1s_1s_2\ldots s_1s_2$ or $s_1s_2s_1s_1s_2\ldots s_2s_1$ accordingly $n$ is even or odd.
    
\end{Remark}

The extension of $1D$ Fibonacci words to $2D$ Fibonacci words is presented in \cite{Apostolico:2000}.

\begin{Definition} \label{DefnApos}
\cite{Apostolico:2000} Let $\Sigma=\{a,b,c,d\}$. The sequence of Fibonacci arrays, $\{f_{m,n}\}$ where $m,n \geq 0$, is defined as:
\begin{enumerate}
\item $f_{0,0}=\beta,f_{0,1}=\gamma,f_{1,0}=\delta,f_{1,1}=\alpha$  where $\alpha, \beta,\gamma$ and $\delta$ are symbols from $\Sigma$ with some but not all, among $\alpha,\beta,\gamma$ and $\delta$ might be identical.
\item For $k \geq 0$ and $m,n \geq 1$, $$f_{k,n+1}=f_{k,n} \obar f_{k,n-1}, \hspace{0.2cm}  f_{m+1,k}=f_{m,k} \ominus f_{m-1,k}.$$
\end{enumerate} 
\end{Definition}

For convenience, let us fix $f_{0,0}=a,f_{0,1}=b,f_{1,0}=c,f_{1,1}=d$, where some but not all of $a,b,c$ and $d$ might be identical.  For example, let us derive the $2D$ Fibonacci word $f_{2,2}$.
$$f_{2,2}=f_{1,2} \ominus f_{0,2}=(f_{1,1} \obar f_{1,0}) \ominus (f_{0,1} \obar f_{0,0}).$$ It can also be obtained by column-wise expansion,
$$f_{2,2}=f_{2,1} \obar f_{2,0}= (f_{1,1} \ominus f_{0,1}) \obar (f_{1,0} \ominus f_{0,0}).$$ Using $f_{0,0}=a,f_{0,1}=b,f_{1,0}=c,f_{1,1}=d$, $f_{2,2}$ is given by $$f_{2,2}=\: \begin{matrix}
f_{1,1} & f_{1,0} \\
f_{0,1} & f_{0,0} 
\end{matrix} \: = \: \begin{matrix}
d & c \\
b & a 
\end{matrix}.$$

We state here some properties of $f_{m,n}$ which we would use later in our proofs.

\begin{Lemma}\cite{Mahalingam:2018}\label{lemprop}
Let $f_{m,n} ,  (m,n = 0,1,2,\dotsc)$ be the sequence of $2D$ Fibonacci arrays over  $\Sigma = \{ a,b,c,d\}$, with $f_{0,0} = a, f_{0,1} = b, f_{1,0} = c, f_{1,1} = d$. Also let $\Sigma_1 = \{a,b\}$, $\Sigma_2 = \{c,d\}$, $\Sigma_1' = \{a,c\}$ and $\Sigma_2' = \{b,d\}$ such that $\Sigma = \Sigma_1 \cup \Sigma_2 = \Sigma_1' \cup \Sigma_2'$. Then,  
\begin{itemize}
\item[a.] any row of $f_{m,n}$ is a $1D$ Fibonacci word over either $\Sigma_1$ or $\Sigma_2$.

\item[b.] if $\Sigma_1 \ne \Sigma_2$ then all the rows of $f_{m,n}$, over $\Sigma_1$ are identical and all the rows of $f_{m,n}$, over $\Sigma_2$ are identical.

\item[c.] any column of $f_{m,n}$ is a $1D$ Fibonacci word over either $\Sigma_1'$ or $\Sigma_2'$.

\item[d.] if $\Sigma_1' \ne \Sigma_2'$ then all the columns of $f_{m,n}$, over $\Sigma_1'$ are identical and all the columns of $f_{m,n}$, over $\Sigma_2'$ are identical. 

\item[e.] if $\Sigma_1 = \Sigma_2 (\Sigma_1'=\Sigma_2')$, then either all the rows$($columns$)$ of $f_{m,n}$ are identical or a set of rows are identical and are complementary to the set of remaining rows $($columns, respectively$)$ which are identical.
\end{itemize}
\end{Lemma}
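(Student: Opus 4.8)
The plan is to prove all five parts simultaneously by induction on $m+n$, exploiting the two dual recursions $f_{m,n+1}=f_{m,n}\obar f_{m,n-1}$ and $f_{m+1,n}=f_{m,n}\ominus f_{m-1,n}$. First I would fix notation: an array $f_{m,n}$ is built from the four base letters $a,b,c,d$, and the key observation is that the row-alphabet structure is inherited from the base cases. In the base cases $f_{0,0}=a$, $f_{0,1}=b$, $f_{1,0}=c$, $f_{1,1}=d$, and more generally I would first check directly (a short separate induction on $n$) that $f_{0,n}$ is a single row equal to the $1D$ Fibonacci word $f_n^{a,b}$ over $\Sigma_1=\{a,b\}$, and $f_{1,n}$ is a single row equal to $f_n^{c,d}$ over $\Sigma_2=\{c,d\}$; dually $f_{m,0}$ is a single column which is the $1D$ Fibonacci word over $\Sigma_1'=\{a,c\}$ and $f_{m,1}$ a single column over $\Sigma_2'=\{b,d\}$. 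These handle the edge of the induction table.

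For the inductive step of part (a), I would use the row recursion: since $f_{m+1,n}=f_{m,n}\ominus f_{m-1,n}$ (valid because both have $n$ columns — this itself needs the routine observation that $|f_{m,n}|_{\col}=F(n)$ and $|f_{m,n}|_{\row}=F(m)$, proved by an easy parallel induction), every row of $f_{m+1,n}$ is a row of $f_{m,n}$ or of $f_{m-1,n}$, so by induction it is a $1D$ Fibonacci word over $\Sigma_1$ or $\Sigma_2$. The subtle point — and the main obstacle — is part (b): I must show that two rows over the \emph{same} alphabet are not merely both Fibonacci words but \emph{identical}. Here the column recursion $f_{m,n+1}=f_{m,n}\obar f_{m,n-1}$ is the right tool: a row of $f_{m,n+1}$ is (a row of $f_{m,n}$) concatenated with the corresponding row of $f_{m,n-1}$, i.e. the $i$-th row of $f_{m,n+1}$ equals $r_i(f_{m,n})\, r_i(f_{m,n-1})$. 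By induction two rows of $f_{m,n}$ over $\Sigma_1$ coincide, and the same for $f_{m,n-1}$; but I also need the relative alphabets to line up — that the $i$-th row of $f_{m,n}$ and the $i$-th row of $f_{m,n-1}$ are over the same one of $\Sigma_1,\Sigma_2$. That consistency of the "alphabet pattern" down the rows is itself what must be carried in the induction hypothesis, so I would strengthen the statement to: there is a fixed $\{1,2\}$-valued pattern (depending only on $m$) telling, for each row index, whether that row is over $\Sigma_1$ or $\Sigma_2$, and this pattern is itself the $1D$ Fibonacci word structure in the vertical direction. With that strengthening, (b) follows because concatenating equal words (over consistent alphabets) yields equal words.

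Parts (c) and (d) are then obtained by the transpose-symmetric argument: the roles of the two recursions swap, $\Sigma_1,\Sigma_2$ are replaced by $\Sigma_1',\Sigma_2'$, and "row" by "column"; no new idea is needed, so I would simply remark that (c),(d) follow by the same induction applied along columns, or by invoking a transposition symmetry of the family $\{f_{m,n}\}$ (which follows from the symmetric form of the two recursions together with the symmetric choice of base letters). Finally, part (e) is the degenerate case: if $\Sigma_1=\Sigma_2$ then the two base letters $\{a,b\}$ and $\{c,d\}$ coincide as sets, so "over $\Sigma_1$" and "over $\Sigma_2$" are the same condition; the alphabet pattern still partitions the rows into two classes, within each of which the rows are identical by (b), and the two classes are related by the unique nontrivial letter-swap of the two-letter alphabet (the complementation), which gives the "complementary" alternative. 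The only real work is setting up the strengthened induction hypothesis in the second paragraph; once that is in place, every step is a one-line consequence of the two defining recursions and the $1D$ Fibonacci recursion $f_n=f_{n-1}f_{n-2}$.
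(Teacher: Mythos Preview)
The paper does not give a proof of this lemma at all: it is quoted verbatim from \cite{Kulkarni:2019} and stated without argument, so there is no ``paper's own proof'' to compare your proposal against.

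That said, your inductive scheme is sound and is the natural way to establish the result. You have correctly identified the one genuinely nontrivial step, namely that for part~(b) the column recursion $f_{m,n+1}=f_{m,n}\obar f_{m,n-1}$ only yields identical rows if the $i$-th rows of $f_{m,n}$ and $f_{m,n-1}$ lie over the \emph{same} sub-alphabet; your strengthening of the induction hypothesis to carry the row-alphabet pattern (depending only on $m$, and itself Fibonacci) is exactly what is needed. A minor simplification is available: if in part~(a) you prove the slightly sharper statement that every $\Sigma_1$-row of $f_{m,n}$ equals the \emph{specific} word $f_n^{b,a}$ and every $\Sigma_2$-row equals $f_n^{d,c}$, then (b) follows for free, since there is only one such word of each kind. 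Your treatment of (e) is correct but would benefit from one extra line: $\Sigma_1=\Sigma_2$ means $\{a,b\}=\{c,d\}$ as sets, which forces either $(c,d)=(a,b)$, giving all rows identical, or $(c,d)=(b,a)$, giving the two row-classes related by the letter-swap $a\leftrightarrow b$, which is precisely the ``complementary'' case.
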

\subsection{The \texorpdfstring{$2D$}{} Infinite Fibonacci Word, \texorpdfstring{$f_{\infty,\infty}$}{}} \label{2DFibosection}
The sequence of $2D$ finite Fibonacci words, $\{f_{m,n}\}_{m,n \ge 0}$, in a sense, has the $2D$ infinite Fibonacci word, $f_{\infty,\infty}$, as its limit. This can be perceived by extending each row, column of any $f_{m,n}$, $m,n \ge 2$, to the $1D$ infinite Fibonacci word over the alphabet of the word present in that row, column. But this outlook is informal. Formally, in \cite{Sivasankar:2023}, the authors have defined the $2D$ infinite Fibonacci word through the $2D$ morphism,
\begin{equation}
\label{FibMap}
\mu : ~~d \rightarrow \begin{matrix}
d & c\\
b & a
\end{matrix},~~c \rightarrow \begin{matrix}
d \\
b 
\end{matrix},~~b\rightarrow \begin{matrix}
d & c
\end{matrix},~~~a  \rightarrow 
d.    
\end{equation}

For a detailed study of multidimensional morphisms, \cite{Charlier:2010} can be referred. 

Observe that the morphism defined by (\ref{FibMap}) is prolongable on $d$ and an infinite number of iterations of $\mu$ on $d$ produces $f_{\infty,\infty}$ \cite{Sivasankar:2023}. That is to say, $f_{\infty,\infty}$ is the fixed point of the morphism $\mu$. That is,

$$f_{\infty,\infty} = \lim_{n \rightarrow +\infty} \mu^n(d) = \mu^{\omega}(d).$$

First few iterations of $\mu$ on $d$ are shown below.
$$d \rightarrow \begin{matrix}
d & c\\
b & a
\end{matrix} \rightarrow \begin{matrix}
d & c & d\\
b & a & b\\
d & c & d
\end{matrix} \rightarrow \begin{matrix}
d & c & d & d & c \\
b & a & b & b & a \\
d & c & d & d & c \\
d & c & d & d & c \\
b & a & b & b & a \\
\end{matrix} \rightarrow \cdots \rightarrow  \begin{matrix}
d & c & d & d & c & d & c & d &\cdots\\
b & a & b & b & a & b & a & b & \cdots\\
d & c & d & d & c & d & c & d &\cdots\\
d & c & d & d & c & d & c & d &\cdots\\
b & a & b & b & a & b & a & b &\cdots\\
d & c & d & d & c & d & c & d &\cdots\\
b & a & b & b & a & b & a & b & \cdots\\
d & c & d & d & c & d & c & d &\cdots\\
\vdots & \vdots & \vdots &\vdots & \vdots & \vdots & \vdots & \vdots & \ddots
\end{matrix}$$

As $f_{\infty, \infty}$ is the limit of $\{f_{m,n}\}_{m,n \ge 0}$, all the properties listed in Lemma \ref{lemprop} are true for $f_{\infty,\infty}$ also.

\section{Enumeration Using Subword Graphs}\label{secdawg}

In \cite{Rytter:2006} the authors have given a way to identify the subwords of $f_{\infty}$ using a directed acyclic graph. 

The directed acyclic word graph of a word $w$, $DAWG(w)$, is the smallest
finite state automaton that recognizes all the suffixes of the word \cite{Blumer:1985}. $CDAWG(w)$, a space efficient variant of $DAWG(w)$, is obtained by compacting $DAWG(w)$ \cite{Crochemore:1997}.

In \cite{Rytter:2006}, the subwords of $f_{\infty}$ are analysed through the graph $\mathcal{G}_{\infty}$, which is, in a certain sense, a $DAWG$ of $f_{\infty} = abaababb\ldots = f_{\infty}(1,2,3,\ldots)$. The $DAWG$ is constructed as below:

Let $F(0) = 1, F(1) = 2, F(n) = F(n-1) + F(n-2)$, for $n \ge 2$, be the Fibonacci sequence (Note that for $a_n$). The nodes of $\mathcal{G}_{\infty}$ are all non-negative integers. For $i > 0$, with $F(i)$ being the $i^{th}$ Fibonacci number, the labelled edges of $\mathcal{G}_{\infty}$ are 
\begin{align*}
    (i-1) \xrightarrow[]{f_{\infty}(i)} i,\quad  F(i)-2 \; \xrightarrow[]{\; s \;} \; F(i+1) -1
\end{align*} 
where $s = a$ whenever $i$ is even and $s=b$ whenever $i$ is odd (Refer Fig. \ref{dawg1d}).
\begin{figure}[ht]
    \centering
    \includegraphics[scale=0.35]{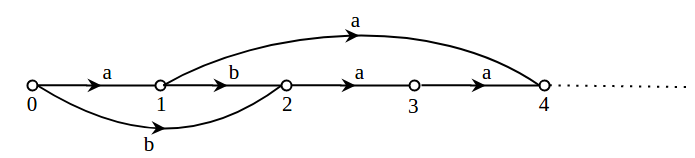}
    \caption{DAWG, $\mathcal{G}_{\infty}$ of $f_{\infty}^{a,b}$}
    \label{dawg1d}
\end{figure}
\subsection{Cross Product of \texorpdfstring{$DAWG$}{}s}
As the $2D$ finite Fibonacci words, $f_{m,n}$, can be obtained by the Cartesian product of Fibonacci reduced representation of the integers $m,n$ \cite{Kulkarni:2019}, a natural extension of $\mathcal{G}_{\infty}$ for the  $2D$ infinite Fibonacci word will be the Cartesian product of $\mathcal{G}_{\infty}$ with itself. 
\begin{Definition} \cite{West:2001}
The Cartesian product of G and H, written $G\,\square\,H$, is the graph with vertex set $V(G) \times V(H)$ specified by putting $(u, v)$ adjacent to $(u', v')$ if and only if $(1) u = u'$ and $vv' \in E(H)$, or $(2) v = v'$ and $uu' \in E(G)$. 
\end{Definition}

Since $f_{\infty, \infty}$ has two distinct rows (one over $\{d,c\}$ and one over $\{b,a\}$), to obtain a $DAWG$ of $f_{\infty, \infty}$,  we slightly modify the labels of  $\mathcal{G}_{\infty}$. Note that, all the rows of $f_{\infty, \infty}$ are $f_{\infty}$ only. In fact the rows over $\{d,c\}$ would be $dcddcdcd\ldots$ and the rows over $\{b,a\}$ would be $babbabab\ldots$. In order to simultaneously control these two categories of rows/words, we will use a single $DAWG$, the $DAWG$ of the Fibonacci word $DCDDCDCD\ldots$, with $D = \{d,b\}$ and $C = \{c,a\}$. With this adaptation, $D$ is allowed to assume either $d$ or $b$ and $C$ is allowed to assume either $c$ or $a$. As the rows of $f_{\infty, \infty}$ are words over a binary alphabet, we also impose an additional condition that, if $D$ assumes $d$ then $C$ would assume $c$ and if $D$ assumes $b$ then $C$ would assume $a$. This $DAWG$, say "\textit{$\mathcal{G}_{\infty}$ for rows}",  is depicted at the top, in Fig. \ref{crosspdt}. In the graph, for convenience, we have written $D = \{d,b\}$ and $C = \{c,a\}$ as `$d,b$'and `$c,a$', respectively.

\begin{figure}[ht]
    \centering
    \includegraphics[scale=0.35]{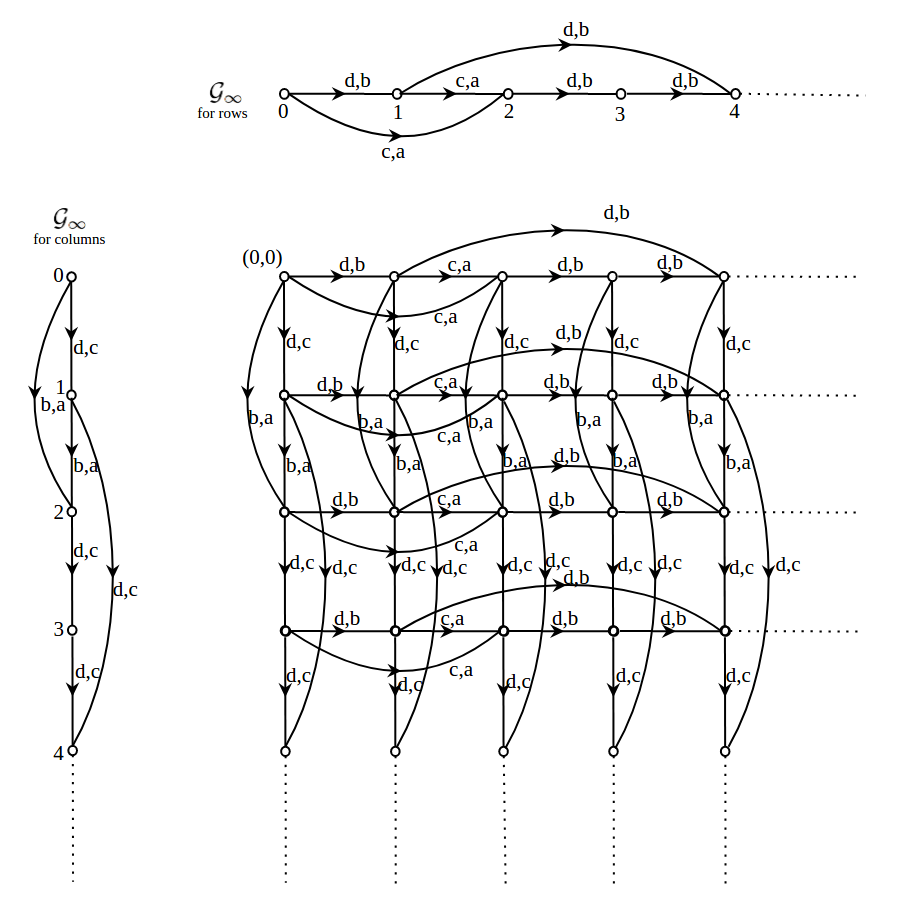}
    \caption{The Cartesian product of \textit{$\mathcal{G}_{\infty}$ for columns} and  \textit{$\mathcal{G}_{\infty}$ for rows}}
    \label{crosspdt}
\end{figure}

Similarly, since $f_{\infty, \infty}$ has two distinct columns (one over $\{d,b\}$ and one over $\{c,a\}$), to manage both the type of columns through a single $DAWG$, we consider the $DAWG$ of the Fibonacci word $D'BD'D'BD'BD'\ldots$, where $D' = \{d,c\}$ and $B = \{b,a\}$, implying $D'$ can be either $d$ or $c$, and $B$ can be either $b$ or $a$, with an additional condition that, if $D'$ is $d$ then $B$ would be $b$ and if $D'$ is $c$ then $B$ would be $a$. Again, in the graph, for convenience we write only `$d,c$' and  `$b,a$' (without the curly braces). This $DAWG$, say "\textit{$\mathcal{G}_{\infty}$ for columns}",  is depicted at the left, in Fig. \ref{crosspdt}.

Now we obtain the Cartesian product of "\textit{$\mathcal{G}_{\infty}$ for columns}" and "\textit{$\mathcal{G}_{\infty}$ for rows}". Note that, when $G$ and $H$ are labelled, the labels are carried over to the edges of the Cartesian product appropriately. The resulting graph is given in Fig. \ref{crosspdt}.

Since $1D$ words have only one direction, one can get all the letters of a subword by traversing along a directed path (starting at the root) of their $DAWG$s. But in $DAWG$s of $2D$ words, to get all the letters in a subword, all the edges that lie between the root and any node that lie in a different column/row may have to be traversed. Clearly, this is not possible as the intended $DAWG$ (that is, the Cartesian product) is acyclic and also prevents any back-and-forth traversals.

But the structure of $2D$ Fibonacci words is such that, for a subword $u$ of $f_{\infty,\infty}$, the knowledge of any one row and any one column of $u$  is enough to write down the entire $u$. Due to this, the Cartesian product will serve as the $DAWG$ of $f_{\infty,\infty}$. Further, since it is enough to know just a row and a column of $u$, even the Cartesian product is redundant and we need only the "$\textit{rooted product}$"  of "\textit{$\mathcal{G}_{\infty}$ for rows}" and "\textit{$\mathcal{G}_{\infty}$ for columns}".

\subsection{Rooted Product of \texorpdfstring{$DAWG$}{}s}

\begin{Definition}\cite{Sergey:2015}
The rooted product of a graph $G$ and a rooted graph $H$, denoted by $G \circ H$,
is defined as follows$:$ take $|V(G)|$ copies of $H$, and for every vertex $v_i$ of $G$, identify $v_i$ with the root vertex of the $i^{th}$ copy of $H$.
\end{Definition}

In other words if the vertex set of $G$ is $\{g_1,\ldots, g_n\}$ and the vertex set of $H$ is $\{h_1,\ldots,h_m\}$ with $h_ 1$ as its root, then the vertex set, $V$ and the edge set, $E$ of $G \circ H$ will be as below.
\begin{align*}
    V &= \{(g_i,h_j): 1 \le i \le n , 1 \le j \le m\}\\
    E &= E_1 \cup E_2 \quad \text{where,}\\
    E_1 &=\{((g_i,h_1),(g_k,h_1)):(g_i,g_k) \in E(G)\},\\
    E_2 &=\Union_{i=1}^n \{((g_i,h_j),(g_i,h_k)):(h_j,h_k) \in E(H)\}
\end{align*}
In fact, it is easy to see that, $G \circ H$ is a subgraph of $G \square H$. 

Now, we take the "$\textit{rooted product}$" of "\textit{$\mathcal{G}_{\infty}$ for rows}" and "\textit{$\mathcal{G}_{\infty}$ for columns}" (Refer Fig. \ref{dawg2d}) to get the $DAWG$ of $f_{\infty,\infty}$ and denote it by $\mathcal{G}_{\infty,\infty}$. From $\mathcal{G}_{\infty,\infty}$, we can obtain the first row  and the last column of any subword of $f_{\infty,\infty}$. We designate the node $(0,0)$ as the root node of $\mathcal{G}_{\infty,\infty}$.

\begin{figure}[ht]
    \centering
    \includegraphics[scale=0.35]{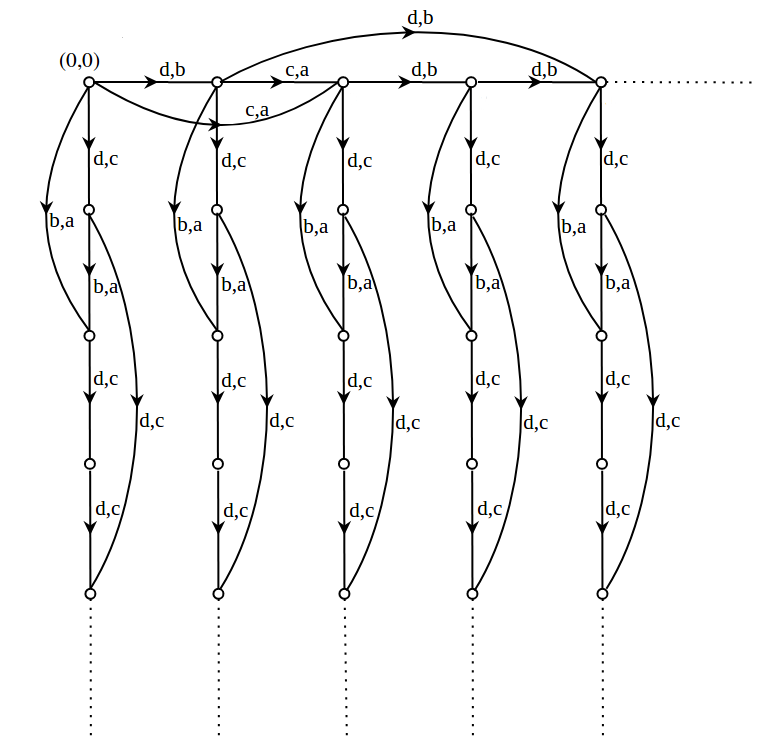}
    \caption{DAWG, $\mathcal{G}_{\infty,\infty}$ of $f_{\infty,\infty}$: (\textit{$\mathcal{G}_{\infty}$ for rows}) $\circ$ (\textit{$\mathcal{G}_{\infty}$ for columns})}
    \label{dawg2d}
\end{figure}
\subsection{Enumerating the subwords: The \texorpdfstring{$DAWG$}{} way}
In this subsection, we prove that the number of finite paths in $\mathcal{G}_{\infty,\infty}$, starting at its root node, equals the number of subwords of $f_{\infty,\infty}$. In particular, we prove that, for $k,l \ge 1$, a path of length $k+l$ , comprising of a horizontal path of length $k$ and a vertical path of length $l$, will lead to subword of $f_{\infty,\infty}$ of size $(k,l)$.  Note that by a horizontal path (a vertical path, respectively), we mean a path whose adjacent vertices are in $\textit{$\mathcal{G}_{\infty}$ for rows}$ ($\textit{$\mathcal{G}_{\infty}$ for columns}$, respectively).

\begin{Theorem} \label{kbylsubword}
Let $k,l \in \mathbb{N}$ be given. Then, from a path of length $k+l$ (starting at the root) in $\mathcal{G}_{\infty,\infty}$, comprising of a horizontal path of length $l$ and a vertical path of length $k$, we can construct a subword of $f_{\infty,\infty}$ of size $(k,l)$. 
\end{Theorem}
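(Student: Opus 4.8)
The plan is to unwind the definition of the rooted product $\mathcal{G}_{\infty,\infty} = (\textit{$\mathcal{G}_{\infty}$ for rows}) \circ (\textit{$\mathcal{G}_{\infty}$ for columns})$ and show that a path of the prescribed shape decodes to a valid subword. First I would observe that any path of length $k+l$ starting at the root $(0,0)$ and consisting of a horizontal segment of length $l$ followed by (or interleaved with, but by the rooted-product structure all vertical edges leave a copy of $H$ attached to a single vertex of $G$, so we may assume the path is first horizontal then vertical) a vertical segment of length $k$ must, by the edge sets $E_1$ and $E_2$ in the definition of the rooted product, factor through a single vertex $(g,h_1)$: the horizontal part lives entirely in the base copy corresponding to $\textit{$\mathcal{G}_{\infty}$ for rows}$ (using $E_1$), reaching some node $g$, and the vertical part lives entirely inside the $g$-th copy of $\textit{$\mathcal{G}_{\infty}$ for columns}$ (using $E_2$). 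Reading the edge labels along the horizontal segment yields, by the construction of $\mathcal{G}_{\infty}$ recalled from \cite{Rytter:2006}, a factor of length $l$ of the row-word $DCDDCDCD\ldots$; reading the labels along the vertical segment yields a factor of length $k$ of the column-word $D'BD'D'BD'BD'\ldots$.

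Next I would use the letter-identification conventions set up just before the theorem: the horizontal path gives a length-$l$ factor $R$ over $\{D,C\}$ and the vertical path gives a length-$k$ factor $K$ over $\{D',B\}$, with the constraints ``$D\mapsto d \Rightarrow C\mapsto c$'' etc. The key structural input is the sentence in the text: for a subword $u$ of $f_{\infty,\infty}$, knowing one row and one column of $u$ suffices to reconstruct all of $u$. I would make this precise via Lemma~\ref{lemprop}: parts (a)--(d) say every row is a $1D$ Fibonacci word over $\Sigma_1$ or $\Sigma_2$ and every column over $\Sigma_1'$ or $\Sigma_2'$, and rows/columns over the same binary alphabet are identical. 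Hence, given the first row $R$ and the last column $K$ of the putative subword $u$ of size $(k,l)$, the entry $u_{i,j}$ is determined: its row-alphabet is forced by whether the $i$-th entry of $K$ lies in $\Sigma_1$ or $\Sigma_2$ (equivalently, is "of type $d/c$" or "of type $b/a$"), and within that row the pattern is the fixed Fibonacci pattern pinned down by $R$; consistency of the two descriptions at the corner entry $u_{1,l}$ is exactly the coupling condition built into the two modified $DAWG$s. I would then exhibit $u$ explicitly by this rule and verify it is a genuine factor of $f_{\infty,\infty}$: since $R$ occurs in $f_{\infty}$ and $K$ occurs in $f_{\infty}$, and since $f_{\infty,\infty} = \mu^{\omega}(d)$ has, by Lemma~\ref{lemprop}(b,d), exactly the two row-types and two column-types appearing in every sufficiently large window, one locates a position in $f_{\infty,\infty}$ whose row-offset realizes the column-pattern $K$ and whose column-offset realizes the row-pattern $R$; the block anchored there equals $u$.

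I expect the main obstacle to be the last step — showing that the combinatorially reconstructed array $u$ actually \emph{occurs} in $f_{\infty,\infty}$, rather than merely being a consistent $\{a,b,c,d\}$-array. Producing the row $R$ at some horizontal offset and the column $K$ at some vertical offset is easy separately in $f_{\infty}$, but one must argue they can be realized \emph{simultaneously} at a common position. The cleanest route is to use the fixed-point/morphic description: every factor of $f_{\infty,\infty}$ of size $(k,l)$ appears inside $\mu^{N}(d)$ for $N$ large, and $\mu^{N}(d) = f_{F(N),F(N)}$-type array whose rows and columns are, by Lemma~\ref{lemprop}, precisely initial segments of the two canonical Fibonacci patterns; a window whose top-left corner sits at the correct residues modulo the relevant Fibonacci numbers then displays $R$ as its top row and $K$ as its right column, and by the reconstruction rule the whole window is $u$. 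A small amount of care is needed to match the parity/offset bookkeeping of $\mathcal{G}_{\infty}$ (the auxiliary edges $F(i)-2 \to F(i+1)-1$) with the positions in $f_{\infty}$, but this is routine given the explicit description of $\mathcal{G}_{\infty}$ quoted above.
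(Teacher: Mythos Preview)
Your proposal is correct and follows essentially the same route as the paper: read a row-factor $H$ from the horizontal segment, a column-factor $V$ from the vertical segment, enforce consistency at the shared corner letter (the paper calls it $s_{joint}$), and then reconstruct the remaining rows by the two-row-type structure of $f_{\infty,\infty}$. The paper does the reconstruction via explicit substitutions $\theta_1:\{a,b\}\to\{c,d\}$ and $\theta_2:\{c,d\}\to\{a,b\}$ and a four-case split on $s_{joint}\in\{a,b,c,d\}$, whereas you phrase it through Lemma~\ref{lemprop}; these are the same argument in different clothing.

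The one place you over-engineer is the ``main obstacle'' paragraph. The paper does not locate the block via $\mu^N(d)$ or any residue/offset argument; it simply observes that the sequence of rows in the constructed array is dictated by $V$, and since $V$ is already a factor of some column of $f_{\infty,\infty}$ (and $H$ a factor of some row), the stack of $H$'s and $\theta(H)$'s in the order prescribed by $V$ is exactly what one sees in $f_{\infty,\infty}$ at any position where that column-factor $V$ and row-factor $H$ meet. No Fibonacci-number bookkeeping is needed for this step.
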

\begin{proof}
Due to the construction of $\mathcal{G}_{\infty,\infty}$, when we start at the root and traverse a horizontal path of length $l \ge 1$, we get a subword of the $1D$ Fibonacci infinite word $DCDDCDCD\ldots$. In fact, we can obtain two horizontal subwords of length $l$, one over $\{d,c\}$ (obtained by taking $d$ for $D$ and $c$ for $C$) and one over $\{b,a\}$ (obtained by taking $b$ for $D$ and $a$ for $C$). The former subword occurs in any row of $f_{\infty,\infty}$ which is over $\{d,c\}$, and the later occurs in any row of $f_{\infty,\infty}$ which is over $\{b,a\}$. 

Now, starting from the last node of this horizontal path, we traverse a vertical path of length $k$. Note that, the rooted product guarantees such a path. Similar to the earlier argument, here we obtain a vertical path of length $k \ge 1$, which corresponds to a subword of length $k$ of the $1D$ Fibonacci infinite word $D'BD'D'BD'BD'\ldots$. Here also we can obtain two vertical subwords of length $k$, one over $\{d,b\}$ (obtained by taking $d$ for $D'$ and $b$ for $B$) and one over $\{c,a\}$ (obtained by taking $c$ for $D'$ and $a$ for $B$). The former subword occurs in any column of $f_{\infty,\infty}$ which is over $\{d,b\}$, and the later occurs in any column of $f_{\infty,\infty}$ which is over $\{c,a\}$.

To prove that these two paths can produce a unique subword of size $(k,l)$ of $f_{\infty,\infty}$, we use the fact that \text{`}the last letter in the first row and the first letter in the last column of a $2D$ word are the same'.  Hence, while constructing the subword, the last letter (say "\textit{$s_{joint}$}") in the horizontal path has to be the first letter in the vertical path. For example, out of the two available subwords of length $l$, suppose we select the subword over $\{d,c\}$, say $H$, and if \textit{$s_{joint}$} $=d$ (\textit{$s_{joint}$} $=c$, respectively), then we will(have to) select the vertical subword , say $V$, over $\{d,b\}$ ($\{c,a\}$, respectively). Now, by taking $H$ and $V$ as the first row and the last column, respectively, in a $2D$ word of size $(k,l)$, we will obtain the entire subword. Again note that, this is not possible for all $2D$ words, but for $f_{\infty,\infty}$, due to its structure.

As any row of $f_{\infty,\infty}$ is either over $\Sigma_1 = \{a,b\}$ or $\Sigma_2 = \{c,d\}$, \textit{$s_{joint}$} has to be either in $\Sigma_1$ or in $\Sigma_2$. As any column of $f_{\infty,\infty}$ is either over $\Sigma_1' = \{a,c\}$ or $\Sigma_2' = \{b,d\}$,  $V$ has to be either in $\Sigma_1'$ or in $\Sigma_2'$.  Hence the following four cases only arise. 
\begin{center}
\begin{tabular}{ l l l }
Case (i)&:& \textit{$s_{joint}$} $=a$ (then, $V$ will be over $\{a,c\}$)\\
Case (ii)&:& \textit{$s_{joint}$} $=b$ (then, $V$ will be over $\{b,d\}$) \\
Case (iii)&:& \textit{$s_{joint}$} $=c$ (then, $V$ will be over $\{a,c\}$)\\
Case (iv)&:& \textit{$s_{joint}$} $=d$ (then, $V$ will be over $\{b,d\}$) 
\end{tabular}
\end{center}
To find the letters occurring at the other positions of $u$ we define two substitutions. If $H$ is over $\{a,b\}$, we create a $1D$ word $H'$ from $H$ using the substitution $\theta_1: \theta_1(a) = c , \theta_1(b) = d$. If $H$ is over $\{c,d\}$, we create a $1D$ word $H''$ from $H$ using the substitution $\theta_2: \theta_2(c) = a , \theta_2(d) = b$.  These words $H'$ and $H''$ will be used to fill up/find the other rows of the subword we are constructing. These substitutions are motivated by the fact that, a row of $f_{\infty,\infty}$ over $\{a,b\}$ can be obtained from a row of $f_{\infty,\infty}$ over $\{c,d\}$ and vice-versa through simple substitutions. 

Let $R_1, R_2, R_3, \ldots, R_k$ be the $k$ rows of the subword being constructed. Note that $R_1 = H$. Now, for $2 \le j \le k$,\\

\textbf{Case(i):} \textit{$s_{joint}$} $=a$ (and hence $H$ is over $\{a,b\}$)

If the letter in the $j^{th}$ row of $V$ is  \textit{$s_{joint}$}, then $R_j = H$ else $R_j = H'$.

\textbf{Case(ii):} \textit{$s_{joint}$} $=b$ (and hence $H$ is over $\{a,b\}$)

If the letter in the $j^{th}$ row of $V$ is  \textit{$s_{joint}$}, then $R_j = H$ else $R_j = H'$.

\textbf{Case(iii):} \textit{$s_{joint}$} $=c$ (and hence $H$ is over $\{c,d\}$)

If the letter in the $j^{th}$ row of $V$ is  \textit{$s_{joint}$}, then $R_j = H$ else $R_j = H''$.

\textbf{Case(iv):} \textit{$s_{joint}$}$=d$ (and hence $H$ is over $\{c,d\}$)

If the letter in the $j^{th}$ row of $V$ is  \textit{$s_{joint}$}, then $R_j = H$ else $R_j = H''$.\\

Note that while constructing the subword, the alphabet of each row and the order in which the two distinct rows ($H$ and $H'$ (or) $H$ and $H''$) of the subword are getting arranged are decided/guided by $V$. Since $V$ is a subword of length $l$ of some column of $f_{\infty,\infty}$, the obtained $2D$ word is a subword of $f_{\infty,\infty}$ of size $(k,l)$. 
\end{proof}

\begin{Remark}
Theorem \ref{kbylsubword} can be proved by taking "$\textit{rooted product}$" of "\textit{$\mathcal{G}_{\infty}$ for columns}" and "\textit{$\mathcal{G}_{\infty}$ for rows}". In that case, first we have to traverse a vertical path of length $k$, then a horizontal path of length $l$ to obtain the first column and the last row of the subword in that order. Finding the other rows can be done similar to the process explained in the proof. 
\end{Remark}

\begin{Remark}\label{rem2}
Since we constructed \textit{$\mathcal{G}_{\infty, \infty}$} as the "$\textit{rooted product}$" of "\textit{$\mathcal{G}_{\infty}$ for rows}" by "\textit{$\mathcal{G}_{\infty}$ for columns}", we will always use a horizontal edge (an edge of \textit{$\mathcal{G}_{\infty}$ for rows}) at first. Also, as $l \ge 1$, we will never use the copy of \textit{$\mathcal{G}_{\infty}$ for columns} rooted at $(0,0)$. Hence we can remove this redundant copy from \textit{$\mathcal{G}_{\infty, \infty}$} and can still entitle the new graph \textit{$\mathcal{G}_{\infty, \infty}$}.
\end{Remark}

\begin{Remark}
The $DAWG$ also can be constructed by a similar methodology as given in \cite{Rytter:2006}. Let 
$$f_{row, \infty} =DCDDCDCD\ldots = f_{row, \infty}(1,2,3,\ldots), $$ $$f_{col, \infty} =D'BD'D'BD'BD'\ldots = f_{col,\infty}(1,2,3,\ldots)$$
where $D,C,D'$ and $B$ are as defined earlier.
The nodes of $\mathcal{G}_{\infty,\infty}$ are all non-negative integer pairs, $(i,j), i,j \ge 0$. 

For $j > 0$, with $F(j)$ being the $j^{th}$ Fibonacci number, the labelled edges of $\mathcal{G}_{\infty,\infty}$ are 
\begin{align*}
    (0,j-1) \xrightarrow[]{f_{row,\infty}(j)} (0,j),\quad (0, F(j)-2) \; \xrightarrow[]{\; s \;} \; (0,F(j+1) -1)
\end{align*} 
where $s = D$ whenever $j$ is even and $s=C$ whenever $j$ is odd, and

for each $j \ge 0$ ($j\ge 1$ is suffice; refer Remark \ref{rem2}) and $i > 0$,
\begin{align*}
    (i-1,j) \xrightarrow[]{f_{col,\infty}(i)} (i,j),\quad (F(i)-2,j) \; \xrightarrow[]{\; s \;} \; (F(i+1) -1,j)
\end{align*} 
where $s = D'$ whenever $i$ is even and $s=B$ whenever $i$ is odd.
\end{Remark}

\begin{Corollary}\label{kplus1by}
For $k,l \ge 1$, there are $(k+1)(l+1)$ subwords of size $(k,l)$ in $f_{\infty,\infty}$.
\end{Corollary}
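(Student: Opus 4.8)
The plan is to count, for fixed $k,l \ge 1$, the number of distinct paths of length $k+l$ in $\mathcal{G}_{\infty,\infty}$ that start at the root $(0,0)$ and consist of a horizontal path of length $l$ followed by a vertical path of length $k$, and then invoke Theorem \ref{kbylsubword} to identify this count with the number of subwords of size $(k,l)$. First I would recall the well-known fact (implicit in the construction of $\mathcal{G}_\infty$ from \cite{Rytter:2006}) that the graph $\mathcal{G}_\infty$ has exactly $l+1$ distinct paths of length $l$ starting at the root node $0$ — this reflects the classical Sturmian count $p_l(f_\infty) = l+1$, the two kinds of edges at the ``Fibonacci'' nodes $F(i)-2$ being precisely what produces the ``$+1$''. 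Since ``\textit{$\mathcal{G}_{\infty}$ for rows}'' is just a relabelled copy of $\mathcal{G}_\infty$, it too has exactly $l+1$ horizontal paths of length $l$ from its root.

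Next I would argue that each of these $l+1$ horizontal paths ends at a node of ``\textit{$\mathcal{G}_{\infty}$ for rows}'', and in the rooted product $\mathcal{G}_{\infty,\infty}$ a full copy of ``\textit{$\mathcal{G}_{\infty}$ for columns}'' is attached with its root identified at that endpoint. By the same Sturmian count, from that root there are exactly $k+1$ vertical paths of length $k$. Hence the total number of admissible paths is $(l+1)(k+1)$. The concatenation of choices is genuinely a product because the vertical part of the path lives entirely inside the copy of ``\textit{$\mathcal{G}_{\infty}$ for columns}'' hung at the chosen horizontal endpoint, so the two choices are independent; and two admissible paths that differ in either their horizontal part or their vertical part are distinct paths in $\mathcal{G}_{\infty,\infty}$.

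Then I would close the loop via Theorem \ref{kbylsubword}: that theorem gives a map from admissible paths to subwords of size $(k,l)$, so it remains to check this map is a bijection. Surjectivity is essentially the content of the theorem's proof, since every subword of size $(k,l)$ is determined by one of its rows $H$ together with one of its columns $V$, and such an $(H,V)$ pair arises from some horizontal-then-vertical path (the ``$s_{joint}$'' compatibility condition matching the last letter of the row to the first letter of the column). For injectivity I would note that distinct admissible paths yield distinct $(H,V)$ pairs — the horizontal path determines $H$ up to the binary alphabet choice and the vertical path determines $V$ up to the binary alphabet choice, but the $s_{joint}$ condition and the structure of $f_{\infty,\infty}$ rigidly fix these alphabet choices relative to one another, and the reconstruction of the whole subword from $H$ and $V$ described in the proof of Theorem \ref{kbylsubword} is deterministic — so no two distinct paths can collapse to the same subword.

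The main obstacle I anticipate is the bijectivity bookkeeping, specifically making precise the claim that the $l+1$ horizontal paths of length $l$ correspond bijectively to the $l+1$ factors of length $l$ of $f_\infty$ (as opposed to, say, overcounting paths that read the same word), and likewise that the $(H,V)$ pairs that Theorem \ref{kbylsubword} turns into subwords are in bijection with admissible paths rather than merely surjecting onto the subword set. This amounts to verifying that in $\mathcal{G}_{\infty,\infty}$ no two distinct root-starting horizontal-then-vertical paths of the prescribed shape spell out the same pair of border words, which follows from the DAWG property of $\mathcal{G}_\infty$ (distinct paths from the root read distinct prefixes/factors) carried through the rooted product; once that is in hand, the count $(k+1)(l+1)$ is immediate.
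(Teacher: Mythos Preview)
Your proposal is correct and follows essentially the same route as the paper: count the $l+1$ horizontal paths in the row-DAWG, the $k+1$ vertical continuations in each attached column-DAWG copy, multiply, and invoke Theorem~\ref{kbylsubword} together with the $s_{joint}$ constraint to pass from paths to subwords. The paper's argument is terser on the bijectivity point (it simply asserts that each admissible path ``uniquely corresponds'' to a subword via Theorem~\ref{kbylsubword}), whereas you spell out the injectivity/surjectivity bookkeeping more carefully; but the underlying decomposition and the key observations are the same.
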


\begin{proof}
 As the graph "\textit{$\mathcal{G}_{\infty}$ for rows}" is the $DAWG$ of the $1D$ Fibonacci word $DCDDC\ldots$, there are $(l+1)$ horizontal paths in $\mathcal{G}_{\infty, \infty}$ \cite{Rytter:2006}. Since the graph "\textit{$\mathcal{G}_{\infty}$ for columns}" is the $DAWG$ of the $1D$ Fibonacci word $D'BD'D'B\ldots$, from the last node of every horizontal path of $\mathcal{G}_{\infty, \infty}$, there are $(k+1)$ vertical paths available for traversing. Note that, though paths with labels from $\{D,C\}$ and $\{D',B\}$ have two possibilities,  due to the condition on \textit{$s_{joint}$} (as explained in the proof of Theorem \ref{kbylsubword}), only one path with labels $\{a,b,c,d\}$ will materialize. Thus, there are $(k+1)(l+1)$ paths of length $k+l$, comprising of a horizontal path of length $l$ and a vertical path of length $k$. Now, by Theorem \ref{kbylsubword}, a path of length $k+l$ in $\mathcal{G}_{\infty,\infty}$, comprising of a horizontal path of length $l$ and a vertical path of length $k$, uniquely corresponds to a subword of size $(k,l)$ of $f_{\infty,\infty}$. Hence the corollary.
\end{proof}

The following example will explain the construction used in the proof of Theorem \ref{kbylsubword}.

\begin{Example}
Let $k =2$ and $l=2$ so that all the subwords of size $(2,2)$ will be obtained. By corollary \ref{kplus1by}, there will be $9$ subwords of this size. Construction of one of these 9 subwords is explained here.

The horizontal paths of length $2$ in $DCDDC\ldots$ are $DC, DD$ and $CD$. Suppose we select $d$ for $D$. Then $H$ can be any one of $\{dc,dd,cd\}$. Let us choose $H$ as $cd$.

Now the vertical paths of length $2$ in $D'BD'D'B\ldots$ are $\begin{matrix}
D'\\
B
\end{matrix}, \; \begin{matrix}
D'\\
D'
\end{matrix}$ and $\begin{matrix}
B\\
D'
\end{matrix}$.
Since \textit{$s_{joint}$} = $d$, to have a subword of $f_{\infty,\infty}$, the vertical path of length 2 should start with $d$. By selecting $d$ for $D'$ we have the three vertical paths $\left\{ \begin{matrix}
d\\
b
\end{matrix}, \begin{matrix}
d\\
d
\end{matrix},\begin{matrix}
b\\
d
\end{matrix} \right\}$.

Let us take $V=\begin{matrix}
d\\
b
\end{matrix}$. Then the first column and the last row of the subword are fixed. 
The incomplete subword is, $\begin{matrix}
c & d\\
 * & b
\end{matrix}$, where the symbol '$*$' denotes the entry therein is unknown yet.

Since $s_{joint} = d$ and the letter in the second row of $V$ is not a $d$, we fill the second row with $H'' = ab$. Hence the subword corresponding to this path is $\begin{matrix}
c & d\\
 a & b
\end{matrix}$.

All the possible 9 cases of $H$, $V$ and their corresponding subwords are listed in Tab. \ref{9subwords} .
\begin{table}[ht]
    \centering
    \caption{All the factors of size $(2,2)$ of $f_{\infty, \infty}$}
    \label{9subwords}
  
 $\begin{tabular}{|c|c|c|c|c|c|c|c|c|c|}
\hline
\textbf{H} & \quad $d c$ \quad & \quad $d c$ \quad  & \quad $d d$ \quad & \quad $d d$ \quad & \quad $c d$ \quad  & \quad $c d$ \quad  & \quad $b a$ \quad  & \quad $b b$ \quad & \quad $a b$ \quad    \\
\hline
\textbf{V} & $\begin{matrix}
c\\
a
\end{matrix}$  & $\begin{matrix}
c\\
c
\end{matrix}$ &  $\begin{matrix}
d\\
b
\end{matrix}$ & $\begin{matrix}
d\\
d
\end{matrix}$ &$\begin{matrix}
d\\
b
\end{matrix}$ & $\begin{matrix}
d\\
d
\end{matrix}$ & $\begin{matrix}
a\\
c
\end{matrix}$ & $\begin{matrix}
b\\
d
\end{matrix}$ & $\begin{matrix}
b\\
d
\end{matrix}$ \\ 
 \hline
\textbf{Incomplete} & $\begin{matrix}
d & c\\
* & a
\end{matrix}$  & $\begin{matrix}
d & c\\
* & c
\end{matrix}$  &  $\begin{matrix}
d & d\\
* & b
\end{matrix}$  & $\begin{matrix}
d & d\\
* & d
\end{matrix}$  & $\begin{matrix}
c & d\\
* & b
\end{matrix}$ & $\begin{matrix}
c & d\\
* & d
\end{matrix}$  & $\begin{matrix}
b & a\\
* & c
\end{matrix}$  & $\begin{matrix}
b & b\\
* & d
\end{matrix}$  & $\begin{matrix}
a & b\\
* & d
\end{matrix}$  \\ 
\textbf{Subword}& &  & & & & & & &  \\
 \hline
\textbf{Complete} & $\begin{matrix}
d & c\\
b & a
\end{matrix}$  & $\begin{matrix}
d & c\\
d & c
\end{matrix}$  &  $\begin{matrix}
d & d\\
b & b
\end{matrix}$  & $\begin{matrix}
d & d\\
d & d
\end{matrix}$  & $\begin{matrix}
c & d\\
a & b
\end{matrix}$ & $\begin{matrix}
c & d\\
c & d
\end{matrix}$  & $\begin{matrix}
b & a\\
d & c
\end{matrix}$  & $\begin{matrix}
b & b\\
d & d
\end{matrix}$  & $\begin{matrix}
a & b\\
c & d
\end{matrix}$  \\ 
\textbf{Subword}& &  & & & & & & &  \\
 \hline
  \end{tabular}$
\end{table}
\end{Example}

\section{Enumeration by Conjugation}\label{secconj}
 For a given $k$, let $n$ be the smallest integer such that  $1 \le k < F(n)$, where $F(n)$ is the $n^{th}$ Fibonacci number. In this section we use the method described in \cite{Chuan:2005}, wherein it is proved that the prefixes of length $k$ of the conjugates of a "special" conjugate of $f_n$ are the subwords  of length $k$ of $f_{\infty}$. The Lemma is recalled here.

With $\Sigma$, an alphabet, define the operator $T$ on $\Sigma^+$ as follows. For a word $w =a_1a_2\ldots a_n \in \Sigma^+$, $T(a_1a_2\ldots a_{n-1}a_n) = a_2\ldots a_{n-1}a_na_1$ and $T^{-1}(a_1a_2\ldots a_{n-1}a_n) = a_na_1a_2\ldots a_{n-1}$. Higher powers of $T$ are defined iteratively. That is, $T^p(w) = T(T^{p-1}(w))$ and $T^{-p}(w) = T^{-1}(T^{-(p-1)}(w))$.
\begin{Lemma}\cite{Chuan:2005}\label{1dTs}
Let $f_0 = a, f_1 = b, f_n = f_{n-1}f_{n-2}, n \ge 2$ be the sequence of Fibonacci words. Let $F(n) = |f_n|$ and let 

\[ q_n = \begin{cases} 
     T^{F(n) - 1}(f_n) & \text{if n is even} \\
     T^{F(n-1) - 1}(f_n) & \text{if n is odd}.
   \end{cases}
\]
Then for each $k$ with $1 \le k < F(n)$, the $k+1$ prefixes of $T^{0}(q_n), T^{-1}(q_n), \ldots, T^{-k}(q_n)$ having length $k$ are the $k + 1$ distinct factors of $f_{\infty}$ of length $k$.
\end{Lemma}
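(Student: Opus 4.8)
Since $f_\infty$ is Sturmian it has exactly $k+1$ distinct factors of length $k$, so it suffices to establish: \emph{(A)} each of the $k+1$ prefixes in the statement is a factor of $f_\infty$; and \emph{(B)} these $k+1$ prefixes are pairwise distinct. Claim (A) is quick: every $T^{-i}(q_n)$, $0\le i\le k$, is a conjugate of $q_n$, hence of $f_n$; and every conjugate of $f_n$ occurs as a length-$F(n)$ window of $f_nf_n$, while $f_nf_n$ is a factor of $f_\infty$ (for instance $f_{n+3}=f_{n+1}f_n f_{n+1}=f_nf_{n-1}f_nf_nf_{n-1}$, so $f_nf_n$ is a factor of $f_{n+3}$). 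Thus each $T^{-i}(q_n)$, and in particular its length-$k$ prefix, is a factor of $f_\infty$, and everything comes down to (B).

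\textbf{A normal form for $q_n$, and one short word carrying all the prefixes.} First I would simplify $q_n$. For $n$ even, $T^{F(n)}$ is the identity on words of length $F(n)$, so $q_n=T^{-1}(f_n)$; for $n$ odd, $T^{F(n-1)}(f_n)=f_{n-2}f_{n-1}$, so $q_n=T^{-1}(f_{n-2}f_{n-1})$. Using the classical factorization $f_n=\pi_n c_n$, where $\pi_n$ is the palindromic prefix of $f_n$ of length $F(n)-2$ and $c_n\in\{ab,ba\}$, together with the companion identity $f_{n-2}f_{n-1}=\pi_n\,\overline{c_n}$ (the same palindrome followed by the transposed pair), both cases collapse to the single formula $q_n=a\,\pi_n\,b$. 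Now $F(n)-2\ge k-1$, so the length-$(k-1)$ prefix of $\pi_n$ is the length-$(k-1)$ prefix $p_{k-1}$ of $f_\infty$ and, by palindromicity, its length-$(k-1)$ suffix is $p_{k-1}^{\,R}$. Expanding $T^{-i}(q_n)$ as $(\text{suffix of } q_n \text{ of length } i)\cdot(\text{prefix of } q_n \text{ of length } k-i)$ for $0\le i\le k$, one finds that the length-$k$ prefix of $T^{-i}(q_n)$ is exactly the length-$k$ window of
\[
z \;=\; p_{k-1}^{\,R}\; b\; a\; p_{k-1}\qquad(|z|=2k)
\]
occupying positions $k+1-i,\dots,2k-i$; in other words, the $k+1$ prescribed prefixes are precisely the $k+1$ consecutive length-$k$ windows of the single word $z$ (sliding leftwards as $i$ grows). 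Moreover, since the seam of the circular word carried by $q_n$ sits one letter before the end of $f_n$ (resp.\ of $f_{n-2}f_{n-1}$), the width-$2k$ window $z$ stays inside $f_nf_n$ (resp.\ inside $(f_{n-2}f_{n-1})^2=f_{n-2}f_{n+1}$), which is a factor of $f_{n+3}$; hence $z$ is a factor of $f_\infty$. (For $k=1$ this degenerates to $z=ba$, with windows $b,a$, so assume $k\ge2$.)

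\textbf{Distinctness via periods.} Suppose two of the windows of $z$ coincide, at horizontal distance $d=j-i$ with $1\le d\le k$. Then $z$ has period $d$ on a factor $w$ of length $k+d$. If $d=1$ then $w$ is a constant run of length $k+1\ge3$, which is impossible because $z$ has no factor $aa$ and no factor $bbb$ (immediate from its explicit form: across the joins of $z$ one only reads $abba$, $bbab$, $baba$ and shorter words). If $d\ge2$ then, as $2d\le k+d$, the factor $w$ contains a square $v^2$ with $|v|=d$, and $v^2$ is a factor of $z$, hence of $f_\infty$; by property~(ii) of the Introduction $v$ is a conjugate of some $f_m$, so $d=|v|=F(m)$ with $m\ge2$ and $F(m)\le k$. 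Since $|w|=k+F(m)$ while $|z|=2k$, the factor $w$ must cover the positions $k+1-F(m)$, $k$, $k+1$, $k+F(m)$ of $z$, so periodicity forces $z_{k+1-F(m)}=z_{k+1}$ and $z_{k}=z_{k+F(m)}$. But from $z=p_{k-1}^{\,R}\,b\,a\,p_{k-1}$ one reads off $z_{k+1}=a$, $z_k=b$, and $z_{k+1-F(m)}=z_{k+F(m)}=f_\infty[F(m)-1]$; hence $f_\infty[F(m)-1]$ would have to equal both $a$ and $b$ — a contradiction. So the $k+1$ windows of $z$ are pairwise distinct, and being $k+1$ factors of $f_\infty$ of length $k$, they are exactly all the factors of $f_\infty$ of length $k$.

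\textbf{Expected obstacle.} The genuinely delicate part is the second paragraph: proving the uniform normal form $q_n=a\pi_nb$ (which rests on the palindromic-prefix identities for $f_n$ and on a careful parity split), and then the index bookkeeping that identifies the $k+1$ prescribed prefixes with the $k+1$ windows of the \emph{one} word $z=p_{k-1}^{\,R}\,b\,a\,p_{k-1}$ lying inside $f_\infty$. Once $z$ has been pinned down, the periodicity argument is short and uses only that $f_\infty$ avoids $aa$ and $bbb$ together with Introduction property~(ii). An alternative route, perhaps closer to the original source, is a direct induction on $n$: for $k<F(n-1)$ relate the conjugates of $f_n$ to those of $f_{n-1}$, and for $F(n-1)\le k<F(n)$ desubstitute along the Fibonacci morphism $\phi$; that avoids palindromic prefixes but requires handling these two ranges of $k$ separately.
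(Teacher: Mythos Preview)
The paper does not prove this lemma at all: it is quoted verbatim from \cite{Chuan:2005} and used as a black box in the subsequent two-dimensional theorem, so there is no ``paper's own proof'' to compare against. What you have supplied is therefore new relative to this paper.

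Your argument is sound. The reduction to the single word $z=p_{k-1}^{\,R}\,b\,a\,p_{k-1}$ is the key simplification, and it rests on the correct normal form $q_n=a\,\pi_n\,b$ valid for both parities (your derivation via $T^{-1}(f_n)=T^{-1}(\pi_n\,ba)$ in the even case and $T^{-1}(f_{n-2}f_{n-1})=T^{-1}(\pi_n\,ba)$ in the odd case checks out against the near-commutation identity). The claim that $z$ itself is a factor of $f_\infty$ is what makes the square argument work, and your justification is right: in fact $z$ is the length-$2k$ factor of $\pi_n\,ba\,\pi_n\,ba$ straddling the central $ba$, and for even $n$ this word equals $f_nf_n$, which sits inside $f_{n+3}$ as you note; since $z$ depends only on $k$, the odd-$n$ case needs no separate treatment, though your $(f_{n-2}f_{n-1})^2=f_{n-2}f_{n+1}$ route also lands inside $f_{n+3}=f_{n+1}f_{n-1}f_{n-2}f_{n+1}$. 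The period/contradiction step is clean: a collision at distance $d\ge 2$ forces a square of length $2d$ inside $z\subset f_\infty$, hence $d=F(m)$ for some $m\ge 2$, and then periodicity pins the single letter $f_\infty[F(m)-1]$ to both $a$ (via $z_{k+1}$) and $b$ (via $z_k$).

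Two minor tightenings you might make when writing it up: (i) note explicitly that the hypothesis $1\le k<F(n)$ forces $n\ge 2$, so the palindromic prefix $\pi_n$ is defined and $|\pi_n|=F(n)-2\ge k-1$; (ii) in the $d=1$ case you only need that $z$, being a factor of $f_\infty$, avoids $aa$ and $bbb$---the ad hoc inspection ``across the joins'' is unnecessary once you know $z$ lies in $f_\infty$.
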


\begin{Example}
Let $f_{\infty} = abaab\ldots$. For $k=4$, $n$ will be $4$, as $4 < F(4)$. So, $f_n = f_4 = abaab$. With $F(4) = 5, F(3) = 3$, we have $q_4 = T^4(abaab) = babaa$, is the special conjugate of $f_4$. 

Now, $T^{0}(q_4), T^{-1}(q_4), T^{-2}(q_4), T^{-3}(q_4), T^{-4}(q_4)$ are $babaa$, $ababa$, $aabab$, $baaba$, $abaab$ respectively and the subwords of $f_{\infty}$ of length 4 are $baba,abab, aaba, baab, abaa$.
\end{Example}

Similar to the operators $T$ and $T^{-1}$, we define four operators on $2D$ words. 
\begin{Definition}
Let $r_1 , r_2, \cdots, r_m$ and $c_1 , c_2, \cdots, c_n$ be the $m$ rows and the $n$ columns of a $2D$ word $w$ of size $(m,n)$. Then the operations $T_{col}(w)$, $T^{-1}_{col}(w)$, $T_{row}(w)$ and $T^{-1}_{row}(w)$ are defined as below.
\begin{align*}
T_{col}(w) & = c_{2} \obar c_{3} \obar   \cdots \obar c_{n} \obar c_{1}\\
T^{-1}_{col}(w) & = c_{n} \obar c_{1} \obar c_{2} \obar   \cdots \obar c_{n-2} \obar c_{n-1}\\
T_{row}(w) & = r_{2} \ominus r_{3} \ominus   \cdots \ominus r_{m} \ominus r_{1} \\
T^{-1}_{row}(w) & = r_{n} \ominus  r_{1} \ominus  r_{2} \ominus    \cdots \ominus  r_{n-2} \ominus  r_{n-1}.
\end{align*}
\end{Definition} 
Higher powers of $T_{col}(w)$, $T^{-1}_{col}(w)$, $T_{row}(w)$ and $T^{-1}_{row}(w)$ are defined iteratively. For example, with $s \ge 1$, $T^s_{col}(w) = T_{col}(T^{s-1}_{col}(w))$.

Through these operators we define the conjucacy class of a $2D$ word $w$.

\begin{Definition} \label{conj}
Let $w$ be a $2D$ word of size $(m,n)$. Then 
\begin{align*}
 Conj(w) &= \left\{ T_{row}^{i} T_{col}^{j} (w) ,  0 \le i \le m-1 , 0 \le j \le n-1 \right\}  \\
   &=  \left\{ T_{col}^{j} T_{row}^{i} (w) ,  0 \le j \le n-1 , 0 \le i \le m-1 \right\}
\end{align*}
is called the Conjugacy Class of $w$.  
\end{Definition}
Since $0 \le i \le m-1$ and $0 \le j \le n-1$, it is easy to see that the number of conjugates of $w$ can be at the maximum $mn$. Note that, if no two rows of $w$ are conjugates of each other and if no two columns of $w$ are conjugates of each other, then the maximum possible value of $mn$ will be achieved by $|Conj(w)|$.

Now, we will enumerate the subwords of size $(k,l)$ of $f_{\infty,\infty}$ using the conjugates of a "special" conjugate of $f_{m,n}$ ($m,n \ge 3$ and depend on $k,l$). 

\begin{Theorem}
Let $F(0)=F(1)=1, F(2)=2, F(3)=3, F(4)=5, \ldots $ be the sequence of Fibonacci numbers. For a given $k,l \ge 1$, consider the $2D$ finite Fibonacci word $f_{m,n}$, where $m$, $n$ are the smallest integers such that $k < F(m)$ and $l < F(n)$. Let 

\[ q_{m,n} = \begin{cases} 
     T_{row}^{F(m) - 1} \left( T_{col}^{F(n) - 1}(f_{m,n})\right) & \text{if m is even and n is even} \\
     T_{row}^{F(m) - 1}\left(T_{col}^{F(n-1) - 1}(f_{m,n})\right) & \text{if m is even and n is odd}\\
      T_{row}^{F(m-1) - 1}\left(T_{col}^{F(n) - 1}(f_{m,n})\right) & \text{if m is odd and n is even} \\
       T_{row}^{F(m-1) - 1}\left(T_{col}^{F(n-1) - 1}(f_{m,n})\right) & \text{if m is odd and n is odd} 
   \end{cases}
\]
Then for each $k$ with $1 \le k < F(m)$ and for each $l$ with $1 \le l < F(n)$, the $(k+1)(l+1)$ prefixes of 
\begin{center}
 \begin{tabular}{c}
$T^{0}_{row}T^{0}_{col}(q_{m,n}), T^{0}_{row}T^{-1}_{col}(q_{m,n}), \ldots \: \dots, \: T^{0}_{row}T^{-l}_{col}(q_{m,n})$,\\
\\
$T^{-1}_{row}T^{0}_{col}(q_{m,n}), T^{-1}_{row}T^{-1}_{col}(q_{m,n}),  \ldots \: \dots, \: T^{-1}_{row}T^{-l}_{col}(q_{m,n})$,\\
$\cdots \quad \cdots \quad \cdots$\\
$\cdots \quad \cdots \quad \cdots$\\
$T^{-k}_{row}T^{0}_{col}(q_{m,n}), T^{-k}_{row}T^{-1}_{col}(q_{m,n}),  \ldots \: \ldots, \: T^{-k}_{row}T^{-l}_{col}(q_{m,n})$
\end{tabular}
\end{center}
having size $(k,l)$ are the $(k+1)(l+1)$ distinct factors of $f_{\infty,\infty}$ of size $(k,l)$.
\end{Theorem}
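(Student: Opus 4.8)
The plan is to reduce the two-dimensional statement to the one-dimensional Lemma \ref{1dTs} by exploiting the product structure of $f_{m,n}$ recorded in Lemma \ref{lemprop}, and then to reassemble factors from their frames using Lemma \ref{frmframtl}. Concretely, I would combine three ingredients: (i) Lemma \ref{1dTs}, applied separately to the rows and to the columns; (ii) Lemma \ref{frmframtl}, which turns a compatible $FRAME_{TL}$ into a genuine factor of $f_{\infty,\infty}$; and (iii) Corollary \ref{kplus1by}, which says there are exactly $(k+1)(l+1)$ factors of size $(k,l)$, so that once I exhibit $(k+1)(l+1)$ pairwise distinct ones, nothing more is needed.

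First I would record how $T_{row}$ and $T_{col}$ interact with the row/column structure of $f_{m,n}$. The operator $T_{col}$ applies the one-dimensional cyclic shift $T$ simultaneously to every row, and $T_{row}$ applies $T$ simultaneously to every column; in particular the two operators commute, and conjugation in the sense of Definition \ref{conj} preserves the structural properties of Lemma \ref{lemprop}, since cyclically permuting rows or columns neither mixes the two row-alphabets $\Sigma_1,\Sigma_2$ nor the two column-alphabets $\Sigma_1',\Sigma_2'$. Because every row of $f_{m,n}$ is the one-dimensional Fibonacci word $f_n$ over its alphabet and every column is $f_m$ over its alphabet, the exponents chosen in the definition of $q_{m,n}$ (distributed by the parities of $m$ and $n$) are exactly those that, by the definition of $q_n$ and $q_m$ in Lemma \ref{1dTs}, turn every row of $q_{m,n}$ into $q_n$ over that row's binary alphabet and every column of $q_{m,n}$ into $q_m$ over that column's binary alphabet. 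Consequently, for $0 \le i \le k$ and $0 \le j \le l$, the first row of $T^{-i}_{row}T^{-j}_{col}(q_{m,n})$ is $T^{-j}(q_n)$ in a fixed binary alphabet, and its first column is $T^{-i}(q_m)$ in a fixed binary alphabet.

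Next I would assemble the factor. Fix $i,j$ and let $P$ be the prefix of size $(k,l)$ of $T^{-i}_{row}T^{-j}_{col}(q_{m,n})$. By the previous step, $FRAME_T$ of $P$ is the length-$l$ prefix of $T^{-j}(q_n)$, which by Lemma \ref{1dTs} (valid since $l < F(n)$) is a factor of length $l$ of $f_{\infty}^{a,b}$ or $f_{\infty}^{c,d}$; likewise $FRAME_L$ of $P$ is the length-$k$ prefix of $T^{-i}(q_m)$, a factor of length $k$ of $f_{\infty}^{a,c}$ or $f_{\infty}^{b,d}$ (since $k < F(m)$). These two share the corner letter $s_{joint,TL}$ automatically, so $FRAME_{TL}$ of $P$ satisfies the hypotheses of Lemma \ref{frmframtl}; moreover, since $P$ is a sub-array of a conjugate of $f_{m,n}$ it inherits the row structure of Lemma \ref{lemprop}, hence $P$ coincides with the array reconstructed from its $FRAME_{TL}$ by that lemma, and therefore $P$ is a factor of $f_{\infty,\infty}$ of size $(k,l)$. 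Distinctness is immediate: if $j\neq j'$ the first rows differ by the distinctness clause of Lemma \ref{1dTs}, and if $j=j'$ but $i\neq i'$ the first columns differ for the same reason; so the $(k+1)(l+1)$ prefixes are pairwise distinct. Since Corollary \ref{kplus1by} gives exactly $(k+1)(l+1)$ factors of size $(k,l)$, these prefixes are precisely all of them.

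I expect the main obstacle to be the bookkeeping in the second paragraph: verifying that the specific exponents $F(m)-1$ versus $F(m-1)-1$ (and the analogues for $n$), selected according to the parities of $m$ and $n$, are exactly what makes every row of $q_{m,n}$ equal to $q_n$ and every column equal to $q_m$, and then checking that after applying $T^{-i}_{row}T^{-j}_{col}$ the row (respectively column) that ends up first is genuinely $T^{-j}(q_n)$ (respectively $T^{-i}(q_m)$) rather than some other conjugate. This reduces to tracking indices modulo $F(m)$ and $F(n)$ and invoking the one-dimensional identities already behind Lemma \ref{1dTs}; everything else is a direct combination of Lemmas \ref{1dTs} and \ref{frmframtl} with Corollary \ref{kplus1by}.
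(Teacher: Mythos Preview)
Your proposal is correct and follows essentially the same route as the paper: both apply Lemma~\ref{1dTs} separately to the rows and columns of $f_{m,n}$ (you phrase this as ``every row of $q_{m,n}$ is $q_n$ and every column is $q_m$'', while the paper symbolizes the columns as letters $D,C$ and the rows as $D',B$ to make the one-dimensional lemma directly applicable), then invoke Lemma~\ref{frmframtl} to pass from the resulting $FRAME_{TL}$s to genuine factors of $f_{\infty,\infty}$. Your explicit appeal to Corollary~\ref{kplus1by} to conclude that the $(k+1)(l+1)$ distinct factors exhaust all factors of size $(k,l)$, and your remark that $T_{row}$ and $T_{col}$ commute, make the argument slightly tidier than the paper's, but the substance is the same.
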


\begin{proof}
Suppose that we want to find all the subwords of $f_{\infty,\infty}$ of size $(k,l)$. Let $F(0)=F(1)=1, F(2)=2, F(3)=3, F(4)=5, \ldots $ be the sequence of Fibonacci numbers. Consider the $2D$ finite Fibonacci word $f_{m,n}$ where $m$ and $n$ are such that $k < F(m)$ and $l < F(n)$. Note that $f_{m,n}$ will be of size ($F(m), F(n)$) \cite{Mahalingam:2018}. 

We prove the theorem for the case where both $m$ and $n$ are even. The proofs of other cases are similar.

Denote the columns of $f_{m,n}$ by $C_1, C_2, \ldots, C_{F(n)}$. Since there are only two distinct columns (refer Lemma \ref{lemprop}), let us symbolize the columns over $\{b,d\}$ by $D$ and the columns over $\{a,c\}$ by $C$. As every row of $f_{m,n}$ is a Fibonacci word of size $F(n)$, the two distinct columns are indeed arranged in a Fibonacci pattern in $f_{m,n}$. That is, the symbolized word for $f_{m,n} = C_1 \obar C_2 \obar \ldots \obar C_{F(n)} = DCDDC\ldots DC = H_n$, say, is a Fibonacci word of size $F(n)$. Since $n$ is even, the suffix of length 2 of $H_n$ will be $DC$. Now by Lemma \ref{1dTs}, the prefixes of length $l$ of  the conjugates of $T^{F(n) - 1}(H_n) = q_n'$ (say), are the subwords of length  $l$ of $H_n$. We now replace the symbols $D$ and $C$ occurring in $q_n'$ by the original columns to get the $2D$ word $q_n$. What we have proved is that, we can arrange the columns of $f_{m,n}$ in a way that we can obtain all the subwords of length $l$ of the infinite Fibonacci words occupying the rows of $f_{m,n}$ through the conjugates of $q_n$.

Now, let us denote the rows of $q_n$ by $R_1, R_2, \ldots, R_{F(m)}$. By symbolizing the rows over $\{d,c\}$ as $D'$ and $\{a,b\}$ as $B$, we get $V_m$, the symbolized word of $q_n$ over $\{D',B\}$ as $V_m = R_1 \ominus R_2 \ominus \ldots \ominus R_{F(m)} = D'BD'D'B\ldots D'B$. Following a similar argument as above, we get a word $q_{m}' = T^{F(m) - 1}(V_m)$ over $\{D',B\}$. We can now replace the symbols occurring in $q_m'$ to get the $2D$ word $q_{m,n}$. What we have proved is that, we can arrange the rows of $q_{n}$ in a way that we can get all the subwords of length $k$ of the infinite Fibonacci words occupying the columns of $f_{m,n}$ through the conjugates of $q_{m,n}$.

Note that $q_{m,n}$ is a conjugate of $f_{m,n}$. In fact, by the two stage process, what we have obtained as $q_{m,n}$ is nothing but $T_{row}^{F(m) - 1} \left( T_{col}^{F(n) - 1}(f_{m,n})\right)$. As assured by Lemma \ref{1dTs}, the rows and columns of $q_{m,n}$ are arranged in such a way that, for each $0 \le i \le k$, $0 \le j \le l$, the prefixes of length $k$ of the first columns and the prefixes of length $l$ of the first rows of  $T^{i}_{row}T^{j}_{col}(q_{m,n})$,  produces $(k+1)(l+1)$ distinct  $FRAME_{TL}$s. We can call $q_{m,n}$ a "special" conjugate of $f_{m,n}$, in this context. Since in each of these $FRAME_{TL}$s, $FRAME_{L}$s are subwords of $f_{\infty}^{d,b}$ or $f_{\infty}^{c,a}$,  and $FRAME_{T}$s are subwords of $f_{\infty}^{d,c}$ or $f_{\infty}^{b,a}$, by Lemma \ref{frmframtl} we get $(k+1)(l+1)$ distinct subwords of $f_{\infty,\infty}$.
\end{proof}

Let us understand the enumeration of the subwords through an example.
\begin{Example}
Let $k= 2$, $l=2$. That is, we wish to find all the $(2,2)$-subwords of $f_{\infty,\infty}$. As $k$ and $l$ are less than $F(3)= 3$, $m=n=3$ and we consider 
\begin{align*}
f_{3,3} = \begin{matrix}
d & c & d\\
b & a & b\\
d & c & d
\end{matrix}.   
\end{align*}
Then, as both $m$ and $n$  are odd,  $q_{3,3} = T_{row}^{2 - 1} \left( T_{col}^{2 - 1}(f_{3,3})\right) = f_{3,3} = \begin{matrix}
a & b & b\\
c & d & d\\
c & d & d
\end{matrix}. $

As mentioned earlier $q_{3,3}$ is a "special" conjugate of $f_{3,3}$. Since no two rows(columns) of $q_{3,3}$ are conjugates of each other, $q_{3,3}$ has $9$ distinct conjugates. All the $9$ conjugates and their corresponding subwords of size $(2,2)$ of  $f_{\infty,\infty}$ are listed in Table \ref{subbyconj}.

\begin{table}[htp]
    \centering
    \caption{Conjugates of $q_{3,3}$ and the subwords of size $(2,2)$ of $f_{\infty, \infty}$}
    \label{subbyconj}
  
 $\begin{tabular}{|c|c|c|}
\hline
&&\\

 \boldmath{\textbf{$T_{row}^{i}(T_{col}^{j}(q_{3,3}))$}} & \textbf{Conjugate of $q_{3,3}$}  & \textbf{Subword} \\ 
&&\\

 \hline
$T_{row}^{0}(T_{col}^{0}(q_{3,3}))$& $\begin{matrix}
a & b & b\\
c & d & d\\
c & d & d
\end{matrix}$  & $\begin{matrix}
a & b\\
c & d
\end{matrix}$    \\ 

 \hline
$T_{row}^{0}(T_{col}^{-1}(q_{3,3}))$ & $\begin{matrix}
b & a & b  \\
d & c & d \\
d & c & d 
\end{matrix}$   & $\begin{matrix}
b & a\\
d & c
\end{matrix}$  \\
 \hline
 
$T_{row}^{0}(T_{col}^{-2}(q_{3,3}))$ & $\begin{matrix}
 b & b & a  \\
 d & d & c \\
 d & d & c 
\end{matrix}$   & $\begin{matrix}
b & b\\
d & d
\end{matrix}$  \\
 \hline
 
$T_{row}^{-1}(T_{col}^{0}(q_{3,3}))$ & $\begin{matrix}
c & d & d\\
a & b & b \\
c & d & d
\end{matrix}$  & $\begin{matrix}
c & d\\
a & b
\end{matrix}$    \\ 

 \hline
$T_{row}^{-1}(T_{col}^{-1}(q_{3,3}))$ & $\begin{matrix}
d & c & d \\
b & a & b  \\
d & c & d 
\end{matrix}$   & $\begin{matrix}
d & c\\
b & a
\end{matrix}$  \\
 \hline
 
 $T_{row}^{-1}(T_{col}^{-2}(q_{3,3}))$ & $\begin{matrix}
  d & d & c \\
  b & b & a  \\
  d & d & c 

\end{matrix}$   & $\begin{matrix}
d & d\\
b & b
\end{matrix}$  \\
 \hline
 $T_{row}^{-2}(T_{col}^{0}(q_{3,3}))$ & $\begin{matrix}
c & d & d \\
c & d & d \\
a & b & b

\end{matrix}$  & $\begin{matrix}
c & d\\
c & d
\end{matrix}$    \\ 

 \hline
$T_{row}^{-2}(T_{col}^{-1}(q_{3,3}))$ & $\begin{matrix}
 d & c & d  \\
  d & c & d  \\
  b & a & b 

\end{matrix}$   & $\begin{matrix}
d & c\\
d & c
\end{matrix}$  \\
 \hline
 
 $T_{row}^{-2}(T_{col}^{-2}(q_{3,3}))$ & $\begin{matrix}
  d & d & c \\
  d & d & c \\
  b & b & a 
\end{matrix}$   & $\begin{matrix}
d & d\\
d & d
\end{matrix}$  \\
 \hline
 
  \end{tabular}$
     
\end{table}

\end{Example}

In \cite{Chuan:2005}, apart from the sophisticated way of obtaining the subwords of $f_{\infty}$, described in Lemma \ref{1dTs}, the author provides another simple way of obtaining the subwords of length $k$. 

\begin{Proposition}\cite{Chuan:2005}\label{sbwrdaspre}
Let $n \ge 2$ and $F(n) \le k < F(n+1)$. Then, the prefixes of length $k$ of $T^{i}(f_{n+1})$,  $i \in \{0,1,\ldots,F(n)-1\} \cup$ $\{F(n+2)-k-1,F(n+2)-k, \ldots, F(n+1)-1\}$,  are the $k+1$ distinct factors of $f_{\infty}$ of length $k$.
\end{Proposition}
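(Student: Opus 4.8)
The plan is to reduce Proposition~\ref{sbwrdaspre} to Lemma~\ref{1dTs}, which is already available, rather than re-proving the subword count from scratch. Recall that Lemma~\ref{1dTs} tells us that for $1 \le k < F(n+1)$ (applying it with index $n+1$ in place of $n$), the $k+1$ length-$k$ prefixes of $T^{0}(q_{n+1}), T^{-1}(q_{n+1}), \ldots, T^{-k}(q_{n+1})$ are exactly the $k+1$ distinct factors of $f_{\infty}$ of length $k$, where $q_{n+1} = T^{F(n+1)-1}(f_{n+1})$ or $T^{F(n)-1}(f_{n+1})$ according to the parity of $n+1$. So the substance of the claim is purely about the \emph{indexing}: I must show that taking the length-$k$ prefixes of $T^{i}(f_{n+1})$ for $i$ in the explicitly listed set $S = \{0,1,\ldots,F(n)-1\} \cup \{F(n+2)-k-1,\ldots,F(n+1)-1\}$ produces the same multiset of $k+1$ words, i.e. that this is just a ``shift of the window'' description of the same conjugacy-class prefixes.

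First I would record the elementary identity $T^{p}(f_{n+1}) = T^{p - (F(n+1)-1)}(q_{n+1})$ when $n+1$ is even (and the analogous one with exponent $F(n)-1$ when $n+1$ is odd), since $T$ is a cyclic shift on the $F(n+1)$ letters of $f_{n+1}$ and $T^{F(n+1)} = \mathrm{id}$. Thus the set $\{T^{i}(f_{n+1}) : i \in S\}$ equals $\{T^{i - c}(q_{n+1}) : i \in S\}$ for the appropriate constant $c$, and modulo $F(n+1)$ the exponent set $\{i-c : i \in S\}$ should coincide with $\{0,-1,\ldots,-k\} \pmod{F(n+1)}$. The second block of $S$, namely $\{F(n+2)-k-1,\ldots,F(n+1)-1\}$, has $k+1 - F(n)$ elements and, using $F(n+2) = F(n+1)+F(n)$, reduces mod $F(n+1)$ precisely to the ``small negative'' exponents $\{-1,-2,\ldots,-(k-F(n))\}$ (after subtracting $c$ and reducing); the first block $\{0,\ldots,F(n)-1\}$ supplies the remaining $F(n)$ exponents. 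Carrying out this bookkeeping — carefully tracking the parity-dependent constant $c$ and checking that the two blocks are disjoint mod $F(n+1)$ and together have exactly $k+1$ elements (which forces them to be distinct conjugates, since $f_{n+1}$ is primitive by property~(iii) of the finite Fibonacci words) — is the heart of the argument.

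The step I expect to be the main obstacle is the parity case analysis in that mod-$F(n+1)$ computation: the exponent $c$ in $q_{n+1}$ switches between $F(n+1)-1$ and $F(n)-1$ depending on whether $n+1$ is even or odd, so the alignment of the set $S$ with $\{0,-1,\ldots,-k\}$ has to be verified separately in the two parities, and one must be vigilant that the window $\{F(n+2)-k-1,\ldots,F(n+1)-1\}$ does not accidentally wrap past $0$ or overlap $\{0,\ldots,F(n)-1\}$ — which uses $F(n) \le k$ to keep the second block nonempty and $k < F(n+1)$ to keep it from swallowing the first. Once the index sets are shown to match, the conclusion is immediate from Lemma~\ref{1dTs}: the prefixes in question are a relabelling of the same $k+1$ conjugate-prefixes, hence are exactly the $k+1$ distinct factors of $f_{\infty}$ of length $k$. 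A concrete sanity check against the worked example with $f_{\infty} = abaab\ldots$, say $n=3$ (so $F(n)=3$) and $k=4$ (so $F(n)\le k < F(n+1)=5$), giving $S = \{0,1,2\} \cup \{8-5,\ldots,4\} = \{0,1,2,4\}$ — four indices for $k+1 = 5$ factors, which already flags that I should double-check whether the two blocks are meant to be taken as an \emph{ordered} list with a repetition or whether the count is off by the overlap; resolving this discrepancy cleanly is exactly the bookkeeping I would pin down first.
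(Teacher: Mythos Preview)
The paper does not prove Proposition~\ref{sbwrdaspre}; it is quoted verbatim from \cite{Chuan:2005} and immediately used as a black box to motivate the $2D$ analogue. So there is no proof in the paper to compare your proposal against.

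That said, two remarks on your proposal itself. First, your sanity check is miscomputed: with $n=3$, $k=4$ you have $F(n+2)-k-1=8-4-1=3$, so the second block is $\{3,4\}$ and $S=\{0,1,2\}\cup\{3,4\}=\{0,1,2,3,4\}$, which has exactly $k+1=5$ elements. There is no discrepancy and no hidden repetition; you simply dropped the index $3$ when writing the union.

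Second, your reduction to Lemma~\ref{1dTs} via matching index sets modulo $F(n+1)$ goes through cleanly only when $n+1$ is odd (so $c=F(n)-1$): then $\{c-j:0\le j\le k\}$ reduces mod $F(n+1)$ exactly to $\{0,\dots,F(n)-1\}\cup\{F(n+2)-k-1,\dots,F(n+1)-1\}=S$, as you outline. When $n+1$ is even (so $c=F(n+1)-1$), the Lemma~\ref{1dTs} exponents form the \emph{contiguous} block $\{F(n+1)-1-k,\dots,F(n+1)-1\}$, which coincides with $S$ only at the extreme $k=F(n+1)-1$; for smaller $k$ the two index sets are genuinely different subsets of $\{0,\dots,F(n+1)-1\}$. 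So in that parity the argument cannot be pure relabelling of conjugates: you would need a further step showing that the two index sets, though different, yield the same multiset of length-$k$ prefixes (for instance via the palindromic symmetry of $g_{n+1}$). This is exactly the ``parity case analysis'' you anticipated being the obstacle, but it is more than bookkeeping.
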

Proposition \ref{sbwrdaspre} is extended  to $f_{\infty,\infty}$ as below. 

\begin{Proposition}
Let $m,n \ge 2$ and $F(m) \le k < F(m+1)$, $F(n) \le l < F(n+1)$. Then the $(k+1)(l+1)$ prefixes of $T_{row}^{i} (T_{col}^{j}(f_{m+1,n+1}))$ of size $(k,l)$, where $i \in \{0,1,\ldots,F(m)-1\} \cup \{F(m+2)-k-1,F(m+2)-k, \ldots, F(m+1)-1\}$, $j \in \{0,1,\ldots,F(n)-1\} \cup \{F(n+2)-l-1,F(n+2)-l, \ldots, F(n+1)-1\}$, are the $(k+1)(l+1)$ distinct factors of $f_{\infty,\infty}$ of size $(k,l)$.
\end{Proposition}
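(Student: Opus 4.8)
The plan is to mirror, in two dimensions, the argument that upgrades Lemma \ref{1dTs} to Proposition \ref{sbwrdaspre}, and to reduce everything to that one-dimensional statement via the now-familiar ``separate the row direction from the column direction'' technique used in the proof of the previous theorem. First I would record the structural facts we already have available: every row of $f_{\infty,\infty}$ is a copy of the $1D$ Fibonacci word (over $\{a,b\}$ or over $\{c,d\}$), every column is a copy of the $1D$ Fibonacci word (over $\{a,c\}$ or over $\{b,d\}$), and by Lemma \ref{frmframtl} a subword of size $(k,l)$ is completely determined by its $FRAME_{TL}$, i.e.\ by one admissible horizontal word of length $l$ together with one admissible vertical word of length $k$ sharing the corner letter $s_{joint,TL}$. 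Hence it suffices to show that the prescribed index sets for $i$ and $j$ produce exactly the $(k+1)$ distinct vertical factors of length $k$ and the $(l+1)$ distinct horizontal factors of length $l$, with the corner-letter compatibility automatically respected.

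The key steps, in order, are: (1) Fix $m,n$ with $F(m)\le k<F(m+1)$ and $F(n)\le l<F(n+1)$, and consider $f_{m+1,n+1}$, which has size $(F(m+1),F(n+1))$ by \cite{Mahalingam:2018}. (2) As in the proof of the previous theorem, symbolize the two distinct columns of $f_{m+1,n+1}$ by two letters $\{D,C\}$; the resulting word $H_{n+1}$ of length $F(n+1)$ is the finite Fibonacci word $f_{n+1}$ (up to letter renaming), since the columns are arranged in Fibonacci order. Apply Proposition \ref{sbwrdaspre} to $H_{n+1}$: the prefixes of length $l$ of $T^{j}(H_{n+1})$ for $j$ in the stated union of two intervals are exactly the $l+1$ distinct horizontal factors of length $l$. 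Crucially, $T^{j}_{col}(f_{m+1,n+1})$ is the $2D$ word whose column-symbolization is $T^{j}(H_{n+1})$, so its first $l$ columns give exactly those horizontal factors, realized with their genuine $\{a,b,c,d\}$ letters. (3) Now symbolize the two distinct rows of each such column-rotated array by $\{D',B\}$; the row-symbolization is again a finite Fibonacci word of length $F(m+1)$, namely $f_{m+1}$ up to renaming. Apply Proposition \ref{sbwrdaspre} once more in the row direction: the prefixes of length $k$ of $T^{i}_{row}$ of this array, for $i$ in the stated union of two intervals, are exactly the $k+1$ distinct vertical factors of length $k$. (4) Combine: for each of the $(l+1)$ choices of $j$ and each of the $(k+1)$ choices of $i$, the prefix of size $(k,l)$ of $T^{i}_{row}(T^{j}_{col}(f_{m+1,n+1}))$ has a $FRAME_{T}$ that is one of the horizontal factors and a $FRAME_{L}$ that is one of the vertical factors, and by Lemma \ref{frmframtl} this $FRAME_{TL}$ determines a genuine size-$(k,l)$ subword of $f_{\infty,\infty}$. (5) Count: there are $(k+1)(l+1)$ such prefixes, and by Corollary \ref{kplus1by} (or the preprint) that is exactly the number of distinct size-$(k,l)$ subwords; together with distinctness of the $FRAME_{TL}$s (which follows from distinctness of the $1D$ prefixes in each direction, plus the fact that $FRAME_{TL}$ determines the subword bijectively) this forces the list to be precisely the set of all size-$(k,l)$ factors. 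Finally note the four parity cases of $(m,n)$ only affect which conjugate of $f_{m+1},f_{n+1}$ one starts from and hence the precise bookkeeping of the index intervals, exactly as in the one-dimensional Proposition \ref{sbwrdaspre}; the argument is otherwise identical, so it is enough to treat one case in detail.

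The main obstacle I expect is step (3): one must be careful that after applying the column rotation $T^{j}_{col}$, the row-symbolization of the resulting $2D$ word is still a (rotated) finite Fibonacci word to which Proposition \ref{sbwrdaspre} applies, and that the row rotations $T^{i}_{row}$ commute with column rotations in the sense needed (they do, since $T_{row}$ and $T_{col}$ act on disjoint ``coordinates''). A related subtlety is verifying that the index set for $i$ guaranteeing all $k+1$ vertical factors is insensitive to which column rotation $j$ was performed --- this is clear because column rotations permute columns and therefore do not change the multiset of rows, hence not the vertical-direction structure --- but it should be stated explicitly. Everything else is a routine transcription of the $1D$ Proposition \ref{sbwrdaspre} plus an appeal to Lemma \ref{frmframtl}, so the write-up can safely say ``the proof parallels that of Proposition \ref{sbwrdaspre} and the previous theorem'' and then fill in only the two-direction bookkeeping.
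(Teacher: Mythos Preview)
Your proposal is correct and follows essentially the same approach as the paper: apply the one-dimensional Proposition \ref{sbwrdaspre} separately in the column direction and in the row direction (via the symbolization trick from the previous theorem), then combine the resulting $FRAME_T$ and $FRAME_L$ through Lemma \ref{frmframtl} and conclude by the count $(k+1)(l+1)$. Your write-up is in fact more careful than the paper's, explicitly noting the commutation of $T_{row}$ and $T_{col}$ and the invariance of the row structure under column rotations, points the paper leaves implicit.
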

\begin{proof}
For $m,n \ge 2$, consider the $2D$ finite Fibonacci word $f_{m+1,n+1}$. Recall that the columns and rows of $f_{m+1,n+1}$ are $1D$ finite Fibonacci words (in fact, they are $f_{m+1}^{d,b}$ or $f_{m+1}^{c,a}$,  and $f_{n+1}^{d,c}$ or $f_{n+1}^{b,a}$). Hence, the $FRAME_{L}$s and $FRAME_{T}$s of the $(k+1)(l+1)$ $FRAME_{TL}$s obtained from the conjugates $T_{row}^{i} (T_{col}^{j}(f_{m+1,n+1}))$, $i \in \{0,1,\ldots,F(m)-1\} \cup \{F(m+2)-k-1,F(m+2)-k, \ldots, F(m+1)-1\}$, $j \in \{0,1,\ldots,F(n)-1\} \cup \{F(n+2)-l-1,F(n+2)-l, \ldots, F(n+1)-1\}$ are nothing but the $(k+1)(l+1)$ appropriate combinations of vertical factors of length $k$ and horizontal factors of length $l$ of the infinite Fibonacci words occurring in the columns and in the rows of $f_{\infty,\infty}$. Since all these $FRAME_{TL}$s are taken from the prefixes of $f_{\infty,\infty}$, the subword constructed from these $FRAME_{TL}$s (refer Lemma \ref{frmframtl}) will be obviously prefixes of $f_{\infty,\infty}$. Hence, the prefixes of size $(k,l)$ of $T_{row}^{i} (T_{col}^{j}(f_{m+1,n+1}))$ for the stated values of $i,j$ are the factors of $f_{\infty,\infty}$ of size $(k,l)$.
\end{proof}

\section{Locating the Factors of \texorpdfstring{$f_{\infty,\infty}$}{}}\label{secloc}
In the previous sections, we developed two methods for listing all the $(k+1)(l+1)$ factors of size $(k,l)$ of $f_{\infty,\infty}$. In this section we will locate (find the exact positions $\{(i,j), i,j \ge 1\}$ of) these factors in the domain of $f_{\infty,\infty}$. We know that since there are only $k+1$ factors of length $k$ in $f_{\infty}$, there are many repetitions of every factor in $f_{\infty}$ \cite{Berstel:1986}. As the rows and columns of $f_{\infty,\infty}$ are composed of $f_{\infty}$, the same happens in $f_{\infty,\infty}$ also.   

For locating the factors of $f_{\infty}$, the reader may either refer \cite{Chuan:2005} or \cite{Rytter:2006}. We recall some terminologies from \cite{Rytter:2006} for our use. 

Let $f_{0} = a, f_1 = ab$ and for $n \ge 1,  f_{n+1} = f_{n}f_{n-1}$ so that $f_{\infty} = abaababaabaab\ldots \ldots = f_{\infty}(1,2,3,\ldots)$. Also, for $n \ge 0$ let $F(n)= |f_{n}|$ be the $n^{th}$ Fibonacci number. For $n \ge 2$, let $g_n$ be the $n^{th}$ truncated Fibonacci word, the word obtained from $f_n$ by removing its last two letters.

Let $u$ be a subword of $f_{\infty}$. By an \textit{occurrence} of $u$ we mean a $i \ge 0$ such that $f_{\infty}(i+1)f_{\infty}(i+2) \dots f_{\infty}(i+|u|) = u$. By \textit{first-occ($u$)} we mean the least value of \textit{occurrence} of $u$ and by \textit{occ($u$)} we mean the set of all \textit{occurrences} of $u$ in $f_{\infty}$. Now, for a set of integers $X$ and for a $j \ge 0$, define the operator $\boxplus$ as, $X \boxplus j = \{x + j: x \in X\}$. 

Recall that the Fibonacci number system represents a number as a sum of Fibonacci numbers such that no two consecutive Fibonacci numbers are used. Also, the sum of zero number of integers equals zero. This representation of any nonnegative integer $n$, in the Fibonacci number system is called the Fibonacci representation of $n$. For $n \ge 1$, let $\mathcal{Z}_n$ be the set of nonnegative integers which do not use Fibonacci numbers $F(0), F(1), F(2), \ldots, F(n-1)$ in their Fibonacci representation. For example $\mathcal{Z}_1 = \{0,2,3,5,\ldots\}$ and $\mathcal{Z}_2 = \{0,3,5,8,11,\ldots\}$. Then, it is proved in \cite{Rytter:2006} that,

\begin{equation} \label{1stocc}
 \textit{occ}(u) = occ(g_n) \boxplus \textit{first-occ}(u),   
\end{equation}

where $n$ is such that $g_n$ is the shortest truncated Fibonacci word containing $u$. Since for $n \ge 2$, \textit{occ}($g_{n+1}$) = \textit{occ}($f_{n}$) = $\mathcal{Z}_{n}$, we have,
$$\textit{occ}(u) = \mathcal{Z}_{n-1} \boxplus \textit{first-occ}(u).$$
We also have that \textit{occ}($f_{1}$) = \textit{occ}($f_{2}$) and \textit{occ}($f_{0}$)$ = \mathcal{Z}_{1}$.
\begin{Example} \label{exam5}
Let us locate the positions of the factor $u = abab$ in $f_{\infty}^{a,b}$. We have \textit{first-occ($u$)} = $3$. Since $u$ occurs for the first time in $g_5$, we get $n=5$ and hence \textit{occ}($abab$) = $\mathcal{Z}_{4} \boxplus 3  = \{0,8,13,21,29,\ldots\} \boxplus 3 = \{3,11,16,24,32,\ldots\}$.
\end{Example}

For locating the factors of $f_{\infty,\infty}$, let us define a structure called "\textit{FRAME}". 
\begin{Definition} \label{frame}
Let $w$ be a $2D$ word. The structure obtained by considering only the first row, the first column, the last row and the last column of $w$ is called the \textit{FRAME} of $w$. In particular, the first row (first column, last row, and last column, respectively) is called \textit{$FRAME_T$} (\textit{$FRAME_L$}, \textit{$FRAME_B$}, and \textit{$FRAME_R$}, respectively).     
\end{Definition}

It is understood that by $FRAME_T, FRAME_L, \ldots$, we refer to the words they contain. Extending Definition \ref{frame}, we can have the substructures  \textit{$FRAME_{TL}$} (which consists of \textit{$FRAME_T$} and \textit{$FRAME_L$}), \textit{$FRAME_{TR}$}, \textit{$FRAME_{LB}$} and \textit{$FRAME_{RB}$}. We will be predominantly using \textit{$FRAME_{TL}$} only.  Note that \textit{$FRAME_T$} and \textit{$FRAME_L$} share a common prefix of length one. We call this common symbol \textit{$s_{joint,TL}$}. Similarly \textit{$s_{joint,TR}$} is defined (Refer Fig. \ref{framesfig}). 

\begin{figure}[ht]
\centering
     \begin{subfigure}{0.49\linewidth}
         \centering
         \includegraphics[width=0.8\linewidth]{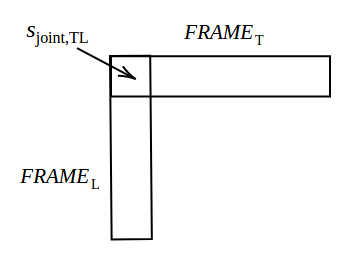}
         \subcaption{$FRAME_{TL}$}
      \end{subfigure}
     \begin{subfigure}{0.49\linewidth}
         \centering
         \includegraphics[width=0.8\linewidth]{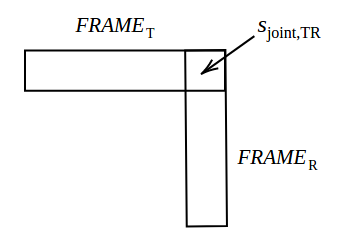}
         \subcaption{$FRAME_{TR}$}
     \end{subfigure}
     \caption{Two of the four substructures of $FRAME$}
     \label{framesfig}
\end{figure}

The following Lemma is inspired by the properties listed in Lemma \ref{lemprop}. Note that there are only two distinct rows in $f_{\infty,\infty}$. These distinct rows also are one and the same words except that their respective alphabets are different. Hence, given the entire first row and any one letter of another row $R$, row $R$ can be written down with ease using a substitution rule.

\begin{Lemma} \label{frmframtl}
Given \textit{$FRAME_{TL}$} of a subword of $f_{\infty,\infty}$ with its $FRAME_{T}$ being a subword of length $l$ of $f_{\infty}^{a,b}$ or $f_{\infty}^{c,d}$  and $FRAME_{L}$ being a subword of length $k$ of $f_{\infty}^{a,c}$ or $f_{\infty}^{b,d}$, we can construct the subword of size $(k,l)$ of $f_{\infty,\infty}$ with that $FRAME_{TL}$.
\end{Lemma}
\begin{proof}
 Let $u$ be the $2D$ word whose \textit{$FRAME_{TL}$} is given. We will make use of the two substitution rules defined in the proof of Theorem \ref{kbylsubword} to get the factor $u$ of $f_{\infty,\infty}$. 

If \textit{$FRAME_{T}$} is over $\{a,b\}$, then define $\theta_1: \theta_1(a) = c , \theta_1(b) = d$. Now, for any row $i$, $2 \le i \le k$, of \textit{$FRAME_{L}$}, if the letter present therein is \textit{$s_{joint,TL}$}, the $i^{th}$ row of $u$ is \textit{$FRAME_{T}$} itself; else, the $i^{th}$ row of $u$ is $\theta_1(\textit{$FRAME_{T}$})$. 

If \textit{$FRAME_{T}$} is over $\{c,d\}$, then define $\theta_2: \theta_2(c) = a , \theta_2(d) = b$. Now, for any row $i$, $2 \le i \le k$, of \textit{$FRAME_{L}$}, if the letter present therein is \textit{$s_{joint,TL}$}, the $i^{th}$ row of $u$ is \textit{$FRAME_{T}$} itself; else, the $i^{th}$ row of $u$ is $\theta_2(\textit{$FRAME_{T}$})$. 

As mentioned in the proof of Theorem \ref{kbylsubword}, the rows other than \textit{$FRAME_{T}$} are constructed using \textit{$FRAME_{L}$}. That is the alphabet of a particular row and the order in which the two distinct rows of the subword are arranged are decided by \textit{$FRAME_{L}$}. Since \textit{$FRAME_{L}$} is a subword of length $l$ of some column of $f_{\infty,\infty}$, the obtained $2D$ word is a subword of $f_{\infty,\infty}$ of size $(k,l)$. 
\end{proof}

\begin{Remark}
In Lemma \ref{frmframtl}, we have constructed the entire subword from \textit{$FRAME_{TL}$}. Similarly, with appropriate conditions on \textit{$FRAME_{L}$}, \textit{$FRAME_{T}$}, \textit{$FRAME_{R}$}, \textit{$FRAME_{B}$}, one can construct the entire subword from any of \textit{$FRAME_{LB}$},\textit{$FRAME_{TR}$} and \textit{$FRAME_{RB}$} also.
\end{Remark}

We are now ready to locate any factor of $f_{\infty,\infty}$. Let $w$ be a subword of $f_{\infty,\infty}$. Let the size of $w$ be $(k,l)$. Note that, because $w$ is a $2D$ word, \textit{first-occ($w$)} will be a pair $(i,j)$ such that \textit{first-occ}($FRAME_T$ of $w$) is $j$ in the $i^{th}$ row of $f_{\infty,\infty}$ and \textit{first-occ}($FRAME_L$ of $w$) is $i$ in the $j^{th}$ column of $f_{\infty,\infty}$ and thus the domain of $w$ in $f_{\infty,\infty}$ is $\{i+1,i+2,\ldots,i+k\} \times \{j+1,j+2,\dots,j+l\}$. The definition of \textit{occ($w$)} is similar to its $1D$ counterpart.  Since a subword of $f_{\infty,\infty}$ is uniquely determined by its $FRAME_{TL}$, its first occurrence and hence its all other occurrences will be determined by the  first occurrences of its $FRAME_{T}$ and $FRAME_{L}$. With $FRAME_{T}$ and $FRAME_{L}$ both being subwords of $1D$ Fibonacci words, we have the following Proposition.

\begin{Proposition}\label{firstocc}
Let $w$ be a subword of $f_{\infty,\infty}$. Let $FRAME_{T}$ and $FRAME_{L}$ denote its first row and first column respectively. Then, 
\begin{align*}
    \textit{first-occ($w$)}  = 
    \begin{cases}
    (fo_{L}^{d,b}, fo_{T}^{d,c}) &\quad\text{if $FRAME_{L}$ is over $\{d,b\}$ and $FRAME_{T}$ is over $\{d,c\}$}\\
    (fo_{L}^{d,b}, fo_{T}^{b,a}) &\quad\text{if $FRAME_{L}$ is over $\{d,b\}$ and $FRAME_{T}$ is over $\{b,a\}$}\\
    (fo_{L}^{c,a}, fo_{T}^{d,c}) &\quad\text{if $FRAME_{L}$ is over $\{c,a\}$ and $FRAME_{T}$ is over $\{d,c\}$}\\
    (fo_{L}^{c,a}, fo_{T}^{b,a}) &\quad\text{if $FRAME_{L}$ is over $\{c,a\}$ and $FRAME_{T}$ is over $\{b,a\}$}\\
     \end{cases}
\end{align*}  
where $fo_{L}^{d,b}$ is the \textit{first-occ}($FRAME_{L}$) in $f_{\infty}^{d,b}$, $fo_{L}^{c,a}$ is the \textit{first-occ}($FRAME_{L}$) in $f_{\infty}^{c,a}$, $fo_{T}^{d,c}$ is the \textit{first-occ}($FRAME_{T}$) in $f_{\infty}^{d,c}$ and $fo_{T}^{b,a}$ is the \textit{first-occ}($FRAME_{T}$) in $f_{\infty}^{b,a}$.
\end{Proposition}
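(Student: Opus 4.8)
The plan is to reduce the two-dimensional occurrence problem to two independent one-dimensional ones, using the fact -- already established via Lemma~\ref{frmframtl} and the $DAWG$ construction -- that a subword of $f_{\infty,\infty}$ is completely determined by its $FRAME_{TL}$, that is, by the pair $(FRAME_T,FRAME_L)$. First I would observe that, for any position $(i,j)$, the subarray of $f_{\infty,\infty}$ occupying $\{i+1,\dots,i+k\}\times\{j+1,\dots,j+l\}$ equals $w$ if and only if its first row is $FRAME_T$ and its first column is $FRAME_L$; equivalently, if and only if $FRAME_T$ occurs at horizontal offset $j$ inside the $(i+1)$-st row of $f_{\infty,\infty}$ \emph{and} $FRAME_L$ occurs at vertical offset $i$ inside the $(j+1)$-st column of $f_{\infty,\infty}$.

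Next I would bring in the structure of $f_{\infty,\infty}$ from Lemma~\ref{lemprop}: every row equals $f_{\infty}^{d,c}$ or $f_{\infty}^{b,a}$, every column equals $f_{\infty}^{d,b}$ or $f_{\infty}^{c,a}$, the alphabet of row $r$ is read off from the entry $f_{\infty,\infty}(r,1)$ and that of column $c$ from $f_{\infty,\infty}(1,c)$, and each entry $f_{\infty,\infty}(r,c)$ is determined by the pair (alphabet of row $r$, alphabet of column $c$). Since $FRAME_T$ and $FRAME_L$ share the common first letter $s_{joint,TL}$, this letter must be the unique common element of their two alphabets; this is exactly what produces the four cases of the statement (for instance $s_{joint,TL}=d$ precisely when $FRAME_L$ is over $\{d,b\}$ and $FRAME_T$ is over $\{d,c\}$).

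I would then carry out the reduction in the case $FRAME_L$ over $\{d,b\}$ and $FRAME_T$ over $\{d,c\}$ (so $s_{joint,TL}=d$), noting that the other three cases are verbatim the same with the alphabets permuted. If $w$ occurs at $(i,j)$, then row $i+1$ must carry a $\{d,c\}$-word, hence equals $f_{\infty}^{d,c}$, so $FRAME_T$ occurs at offset $j$ in $f_{\infty}^{d,c}$; likewise column $j+1$ equals $f_{\infty}^{d,b}$, so $FRAME_L$ occurs at offset $i$ in $f_{\infty}^{d,b}$. Conversely, if $FRAME_L$ occurs at offset $i$ in $f_{\infty}^{d,b}$ then $f_{\infty}^{d,b}(i+1)=s_{joint,TL}=d$, i.e.\ $f_{\infty,\infty}(i+1,1)=d$, which forces row $i+1$ to be a $\{d,c\}$-row and thus to equal $f_{\infty}^{d,c}$; symmetrically, $FRAME_T$ occurring at offset $j$ in $f_{\infty}^{d,c}$ forces column $j+1$ to equal $f_{\infty}^{d,b}$. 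Hence both one-dimensional occurrences are genuinely realized inside $f_{\infty,\infty}$, so $w$ occurs at $(i,j)$. Therefore $\textit{occ}(w)$ equals the Cartesian product $\textit{occ}_{f_{\infty}^{d,b}}(FRAME_L)\times\textit{occ}_{f_{\infty}^{d,c}}(FRAME_T)$, and taking the least element in each coordinate yields $\textit{first-occ}(w)=(fo_{L}^{d,b},fo_{T}^{d,c})$; the remaining three cases give the other three lines of the formula. As a bonus, combined with the one-dimensional description $\textit{occ}(u)=\mathcal{Z}_{\cdot}\boxplus\textit{first-occ}(u)$ recalled above, this also pins down all of $\textit{occ}(w)$.

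The step I expect to be the main obstacle, and the one I would write most carefully, is the converse direction just above: showing that the alphabet/type requirements on row $i+1$ and column $j+1$ are not extra constraints but are automatically enforced by the one-dimensional occurrences of $FRAME_L$ and $FRAME_T$ -- this works only because both $FRAME_T$ and $FRAME_L$ begin with $s_{joint,TL}$, and because that single letter pins down the relevant row/column alphabet through the ``entry $=$ function of (row alphabet, column alphabet)'' structure of $f_{\infty,\infty}$. Everything else -- the reduction to the first row and first column, and the passage from a Cartesian product of occurrence sets to the first occurrence -- is routine bookkeeping over the four cases.
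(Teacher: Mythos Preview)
Your proposal is correct and follows essentially the same approach as the paper: reduce to the one-dimensional first occurrences of $FRAME_T$ and $FRAME_L$ in the appropriate $f_{\infty}^{s_1,s_2}$, treat one of the four alphabet cases explicitly, and declare the others analogous. The paper's proof is briefer and somewhat informal on precisely the point you single out as the main obstacle --- the converse direction showing that the row/column alphabet constraints at position $(i,j)$ are automatically forced by the one-dimensional occurrences via the shared letter $s_{joint,TL}$ --- so your write-up is in fact more complete than the original on that step; the Cartesian-product description of $\textit{occ}(w)$ you derive along the way is what the paper records separately as the subsequent Corollary.
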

\begin{proof}
We discuss the proof for the case in which $FRAME_{L}$ is over $\{d,b\}$ and $FRAME_{T}$ is over $\{d,c\}$. Proofs of the other cases are similar. 

Since $w$ can occur in $f_{\infty,\infty}$, only when $FRAME_{TL}$ of $w$ occurs in $f_{\infty,\infty}$, it is clear that \textit{first-occ($w$)} is decided by  \textit{first-occ}($FRAME_{L}$) in $f_{\infty}^{d,b}$ and \textit{first-occ}($FRAME_{T}$) in $f_{\infty}^{d,c}$. Let $fo_{L}^{d,b} \ge 0$, be \textit{first-occ}($FRAME_{L}$) in $f_{\infty}^{d,b}$. Let $fo_{T}^{d,c} \ge 0$ be \textit{first-occ}($FRAME_{T}$) in $f_{\infty}^{d,c}$. Since all the columns of $f_{\infty,\infty}$ over $\{d,b\}$ are identical $fo_{L}^{d,b}$ value will be the same in all the columns which are over $\{d,b\}$. So in the ${fo_{T}^{d,c}}^{\:th}$ column (where $FRAME_{T}(w)$ occurs for the first time) also, $fo_{L}^{d,b}$ will be the same. Similarly, since all the rows of $f_{\infty,\infty}$ over $\{d,c\}$ are identical $fo_{T}^{d,c}$ value will be the same in all the rows which are over $\{d,c\}$. 

Now, \textit{first-occ}($FRAME_{TL}(w)$) can be $(i,j)$, say, only when both \textit{first-occ}($FRAME_{L}(w)$) and \textit{first-occ}($FRAME_{T}(w)$) are $(i,j)$. Hence \textit{first-occ($w$)} = \textit{first-occ}($FRAME_{TL}(w)$) = ($fo_{L}^{d,b}$, $fo_{T}^{d,c}$),\: if $FRAME_{L}$ is over $\{d,b\}$ and $FRAME_{T}$ is over $\{d,c\}$.
\end{proof}
\begin{Corollary}
Let $w$ be a subword of $f_{\infty,\infty}$. Let \textit{first-occ($w$)} be given by Proposition \ref{firstocc}. Let $FRAME_{L}(w)$ be over $\{s_1,s_2\}$ and $FRAME_{T}(w)$ be over $\{s'_1,s'_2\}$. Then, \textit{occ}($w$) = $X \times Y$, where $X = \mathcal{Z}_{m-1} \boxplus fo_{L}^{s_1,s_2}$ and $Y=\mathcal{Z}_{n-1} \boxplus fo_{T}^{s'_1,s'_2}$.
 
\end{Corollary}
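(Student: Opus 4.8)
The plan is to reduce the two-dimensional statement to the one-dimensional occurrence formula recalled from \cite{Rytter:2006}, applied independently to the two coordinates. The starting point is Proposition \ref{firstocc}, which already tells us that \textit{first-occ($w$)} is the pair $(fo_{L}^{s_1,s_2}, fo_{T}^{s'_1,s'_2})$, where the two components are the first occurrences of $FRAME_{L}(w)$ in the appropriate column-word $f_{\infty}^{s_1,s_2}$ and of $FRAME_{T}(w)$ in the appropriate row-word $f_{\infty}^{s'_1,s'_2}$. The key structural fact, already established in the paper, is that a subword $w$ of $f_{\infty,\infty}$ is completely determined by its $FRAME_{TL}$ (Lemma \ref{frmframtl}); hence an occurrence of $w$ at position $(i,j)$ is the \emph{same thing} as a simultaneous occurrence of $FRAME_{L}(w)$ as a vertical factor starting at row $i+1$ in some column, and of $FRAME_{T}(w)$ as a horizontal factor starting at column $j+1$ in some row, with $s_{joint,TL}$ sitting at $(i+1,j+1)$.

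First I would make precise the claim that \textit{occ($w$)} factors as a Cartesian product. The containment $\textit{occ}(w) \subseteq X \times Y$ follows because any occurrence of $w$ forces an occurrence of $FRAME_{L}(w)$ in a $\{s_1,s_2\}$-column (so its row-coordinate lies in $\textit{occ}(FRAME_L)$ within $f_{\infty}^{s_1,s_2}$, which by the one-dimensional result is $\mathcal{Z}_{m-1}\boxplus fo_L^{s_1,s_2}$) and simultaneously an occurrence of $FRAME_{T}(w)$ in a $\{s'_1,s'_2\}$-row (so its column-coordinate lies in $\mathcal{Z}_{n-1}\boxplus fo_T^{s'_1,s'_2}$). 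Here $m$ is chosen as in the one-dimensional theory so that $g_m$ is the shortest truncated Fibonacci word containing $FRAME_L(w)$ (equivalently $F(m) \le k$, up to the indexing convention used in the recalled results), and similarly $n$ for $FRAME_T(w)$ and $l$. For the reverse containment $X\times Y \subseteq \textit{occ}(w)$, I would use Lemma \ref{lemprop}: all columns of $f_{\infty,\infty}$ over $\{s_1,s_2\}$ are identical and all rows over $\{s'_1,s'_2\}$ are identical, so if $FRAME_L(w)$ occurs at row-offset $x\in X$ in every $\{s_1,s_2\}$-column and $FRAME_T(w)$ occurs at column-offset $y\in Y$ in every $\{s'_1,s'_2\}$-row, then at position $(x,y)$ the letter of $f_{\infty,\infty}$ is exactly $s_{joint,TL}$, the $(x,\cdot)$-column restricted to the right alphabet carries $FRAME_L(w)$ and the $(\cdot,y)$-row carries $FRAME_T(w)$; by the reconstruction in Lemma \ref{frmframtl} the whole $(k,l)$ block at $(x,y)$ equals $w$.

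The one remaining point is to verify that the indices $x\in X$ and $y\in Y$ are \emph{compatible}, i.e. that the column in which $FRAME_L(w)$ would start at offset $y+1$ is indeed over $\{s_1,s_2\}$ and the row at offset $x+1$ is over $\{s'_1,s'_2\}$. This is where the construction of \textit{first-occ} in Proposition \ref{firstocc} is used again: because $fo_L^{s_1,s_2}$ and $fo_T^{s'_1,s'_2}$ were chosen consistently with the alphabet of $s_{joint,TL}$, the pair $(fo_L^{s_1,s_2}, fo_T^{s'_1,s'_2})$ lands on a cell whose letter is $s_{joint,TL}$; and translating by an element of $\mathcal{Z}_{m-1}$ in the row direction and $\mathcal{Z}_{n-1}$ in the column direction preserves the alphabet pattern of the relevant row and column (again by the identical-rows/identical-columns property of Lemma \ref{lemprop}, combined with the fact that $\mathcal{Z}$-shifts are exactly the period structure of $f_{\infty}$). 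So every $(x,y)\in X\times Y$ is a genuine occurrence.

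The main obstacle I expect is bookkeeping the alphabet-compatibility across the two directions — making sure the Cartesian-product description does not accidentally include a cell where, say, $FRAME_L(w)$ wants a $\{d,b\}$-column but the ambient column there is over $\{c,a\}$. The cleanest way around this is to observe that the set of row-offsets at which a $\{d,b\}$-column appears (versus a $\{c,a\}$-column) is itself governed by the Fibonacci structure, and that $\mathcal{Z}_{m-1}\boxplus fo_L^{s_1,s_2}$ already sits inside the correct such class because its base point $fo_L^{s_1,s_2}$ does and the shift set $\mathcal{Z}_{m-1}$ respects it; the analogous statement holds in the column direction. Once that is checked, the proof is just: $\textit{occ}(w)=X\times Y$ with $X=\mathcal{Z}_{m-1}\boxplus fo_L^{s_1,s_2}$ and $Y=\mathcal{Z}_{n-1}\boxplus fo_T^{s'_1,s'_2}$, which is exactly the statement of the corollary, invoking the one-dimensional occurrence formula of \cite{Rytter:2006} twice, once per coordinate.
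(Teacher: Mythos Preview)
Your proposal is correct and follows essentially the same approach as the paper: apply the one-dimensional occurrence formula from \cite{Rytter:2006} independently to $FRAME_L$ and $FRAME_T$, then use the fact that $\textit{occ}(w)=\textit{occ}(FRAME_{TL}(w))$ to conclude $\textit{occ}(w)=X\times Y$. Your treatment is in fact more careful than the paper's, which simply asserts the Cartesian-product identity without separately arguing the two inclusions or the alphabet-compatibility point you raise; that compatibility is indeed automatic, since wherever $FRAME_T$ occurs its first letter is $s_{joint,TL}$, forcing the column there to be over $\{s_1,s_2\}$, and symmetrically for $FRAME_L$ and the row alphabet.
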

\begin{proof}
In all the columns of $f_{\infty,\infty}$ which are  over $\{s_1,s_2\}$, \textit{occ}($FRAME_{L}$) = $\mathcal{Z}_{m-1} \boxplus fo_{L}^{s_1,s_2}$, where $m$ is such that $g_m$ is the shortest truncated Fibonacci word over $\{s_1,s_2\}$ containing $FRAME_{L}$. Similarly, in all the rows of $f_{\infty,\infty}$ which are  over $\{s_1',s_2'\}$, \textit{occ}($FRAME_{T}$) = $\mathcal{Z}_{n-1} \boxplus fo_{T}^{s_1',s_2'}$, where $n$ is such that $g_n$ is the shortest truncated Fibonacci word over $\{s_1',s_2'\}$ containing $FRAME_{T}$. Therefore, \textit{occ}($FRAME_{TL}(w)$) = (\textit{occ}($FRAME_{L}(w)$),\:\textit{occ}($FRAME_{T}(w)$)) = $\{(x,y): x \in \mathcal{Z}_{m-1} \boxplus fo_{L}^{s_1,s_2}, y \in \mathcal{Z}_{n-1} \boxplus fo_{T}^{s_1',s_2'}\} = X \times Y$ where $X = \mathcal{Z}_{m-1} \boxplus fo_{L}^{s_1,s_2}$ and $Y=\mathcal{Z}_{n-1} \boxplus fo_{T}^{s'_1,s'_2}$. Since \textit{occ}($w$) =  \textit{occ}($FRAME_{TL}(w)$), the result follows.
\end{proof}
\begin{Example}
Let us find the \textit{occ($w$)} where $w = \begin{matrix}
d&d&c\\
d&d&c\\
b&b&a
\end{matrix}$. 

Note that, $FRAME_{L}$ of $w$ is \: $\begin{matrix}
d\\
d\\
b
\end{matrix}$ \: and \textit{first-occ}($FRAME_{L}$ ) in $f_{\infty}^{d,b}$ is $2$. That is $fo_{L}^{d,b} = 2$. Also the value of $m$ such that $g_m^{d,b}$ contains $FRAME_{L}$ is $4$. Similarly, $FRAME_{T}$ of $w$ is \: "$\begin{matrix}
d& d& c \end{matrix}$" \: and \textit{first-occ}($FRAME_{T}$) in $f_{\infty}^{d,c}$ is $fo_{T}^{d,c} = 2$. The value of $n$ such that $g_n^{d,c}$ contains $FRAME_{T}$ is $4$. 

Therefore, \textit{first-occ}($w$) = $(2,2)$. And, \textit{occ($w$)} = $X \times Y$, where where $X = \mathcal{Z}_{3} \boxplus 2$ and $Y=\mathcal{Z}_{3} \boxplus 2$. With $\mathcal{Z}_{3} = \{0,5,8,13,18,\ldots\}$, we have $X = \{2,7,10,15,20,\ldots\}$ and $Y = \{2,7,10,15,20,\ldots\}$. Hence \textit{occ($w$)} = $\{(2,2), (2,7), (2,10), \ldots, (7,2), (7,7), (7,10),  \ldots\}$.   
\end{Example}

\section{Fibonacci sequence of \texorpdfstring{$1D$}{} words}\label{sec_uv_words}
In this section we discuss the factor complexity of the Fibonacci language $F_{u,v}$ where $u,v \in \{a,b\}^+$ and $|u|,|v| \ge 2$. We find the bounds of the factor complexity function and the location of the factors of the fixed point of the Fibonacci sequence of words.  

\begin{Definition}\cite{Shyr:2005}
    Let $\Sigma$ be a finite alphabet consisting of more than one element. For two words $u,v \in \Sigma^+$ the following two types of Fibonacci sequences of words can be defined.
    \[ (1)\text{~} w_0 = u, w_1 = v, w_2 = vu, \ldots, w_n=w_{n-1}w_{n-2} \ldots;\]
    \[ (2)\text{~} w_0' = u, w_1' = v, w_2' = uu, \ldots, w_n'=w_{n-2}'w_{n-1}' \ldots;\]
\end{Definition}

For example, with $\Sigma = \{a,b\}, u=abba$ and $v=bba$ we have,  $w_0 = abba, w_1 = bba, w_2 = bbaabba, w_3= bbaabbabba, \ldots$. The languages $F_{u,v} = \{w_i \mid i \ge 0\}$ and $F'_{u,v} = \{w_i' \mid i \ge 0\}$ are called Fibonacci Languages. As $F_{u,v}$ and $F'_{u,v}$ are similar, it is enough to study $F_{u,v}$. In \cite{Shyr:2005}, primitive and palindromic words in $F_{u,v}$ are studied.

Note that if we denote by $\Gamma$ the alphabet $\{u,v\}$ then $w_0,w_1,w_2,\ldots$ are the $1D$ Fibonacci words over $\Gamma$ generated by the familiar Fibonacci morphism $g: u \rightarrow v, \quad v \rightarrow vu$. Denoting the fixed point of this sequence of Fibonacci words by $f_{\infty,u,v}$ we have, $f_{\infty;u,v} = vuvvuvuv\cdots$. Now, for $k,k' \ge 2$, suppose we have $u = u_1u_2\cdots u_k,v = v_1v_2\cdots v_{k'}$, with $u_1,\cdots, u_k,v_1,\cdots, v_{k'} \in \Sigma$, then we get the fixed point of this Fibonacci sequence of words as an infinite word over $\{a,b\}$. Let us denote this fixed point by $f_{\infty;a,b}$. 

\subsection{Factor Complexity of \texorpdfstring{$f_{\infty;a,b}$}{}} \label{FacCompFibSeq}

Here, as a first step, we study the factor complexity of $f_{\infty;a,b}$ under the condition that $|u|=|v|$ (i.e. when $k = k'$).

\begin{Theorem}\label{T3}
    Let $k \ge 2$ and $|u|=|v|=k$. Let $m$ denote the length of the factors of $f_{\infty;u,v}$ and let $l$ denote the length of the factors of $f_{\infty;a,b}$. Given an $l \ge 2$, consider the least $m \ge 2$ such that $km \ge l$. Then for $(m-1)k+2 \le l \le mk+1$, we have,

\[   
p_{f_{\infty;a,b}}(l)  \le 
     \begin{cases}
       \text{$(k-1)(m+1)+(1)(m+2)$,} &\quad\text{if $l=(m-1)k+2$}\\
       \text{$(k-2)(m+1)+(2)(m+2)$,} &\quad\text{if $l=(m-1)k+3$}\\
       \hspace{1.5cm}\text{$\vdots$} &\hspace{1.5cm}\text{$\vdots$}\\
       \text{$(1)(m+1)+(k-1)(m+2)$,} &\quad\text{if $l=mk$}\\
       \text{$(k)(m+2)$,} &\quad\text{if $l=mk+1$}\\
     \end{cases}
\]
That is, for $l = (m-1)k + i + 1$, $1 \le i \le k$, we have $p_{f_{\infty;a,b}}(l)  \le (k-i)(m+1) + (i)(m+2)$. 
\end{Theorem}
\begin{proof}
    Let $m \ge 2$ denote the length of the factors of $f_{\infty;u,v}$ and let $l \ge 2$ denote the length of the factors of $f_{\infty;a,b}$. We analyze $p_{f_{\infty;a,b}}(l)$ iteratively as $m$ increases from $2$, in steps of $1$. At every iterative stage we count the number of new factors created by appending either $u = u_1u_2\cdots u_k$  or $v = v_1v_2\cdots v_{k}$ and update $l$. Observe that only these two symbols can be appended to the existing factor (of length $m-1$) of $f_{\infty;u,v}$. In other words we analyze the factors  of  $f_{\infty;a,b}$ through the factors of $f_{\infty;u,v}$.
    
    Let us visualize $f_{\infty;u,v}$ and $f_{\infty;a,b}$ as shown below. \\

$f_{\infty;u,v} = \begin{tabular}{|c|c|c|c|c|}
\hline
$v$&$u$&$v$&$v$&$u$\\
\hline
\end{tabular} \cdots \cdots.$\\

 $f_{\infty;a,b} = \begin{tabular}{|c|c|c|c|c|}
\hline
$v_1v_2\cdots v_k$&$u_1u_2\cdots u_k$&$v_1v_2\cdots v_k$&$v_1v_2\cdots v_k$&$u_1u_2\cdots u_k$\\
\hline
\end{tabular} \cdots \cdots.$\\

Recall that $f_{\infty;u,v}$ being the infinite Fibonacci word has $m+1$ factors of length $m$. For an easy understanding, let us elaborate the counting process for $m=2$. Consider any one of the three factors $vu,uv,vv$. Let us take $vu = \begin{tabular}{|c|c|}
\hline
$v_1v_2\cdots v_k$&$u_1u_2\cdots u_k$\\
\hline
\end{tabular}$. The following table can be constructed easily by observing the starting and the ending positions of the new factors created while appending $u$ with $v$.

\begin{table}[h!]
\centering
\begin{tabular}{|c|c|c|}
\hline
\textbf{Factors of $f_{\infty;a,b}$}&  \textbf{Length of the factor ($l$)} &\textbf{Number of factors} \\
\hline
$v_1v_2\cdots v_ku_1$,~
$v_2v_3\cdots v_ku_1u_2$,~$\cdots$&  $k+1$ &  $k$ \\
\hline
$v_2v_3\cdots v_ku_1$,~
$v_3v_4\cdots v_ku_1u_2$,~$\cdots$&  $k$ &  $k-1$ \\
\hline
$\cdots$&  $\cdots$ &  $\cdots$ \\
\hline
$v_{k-1}v_ku_1$, $v_ku_1u_2$&  $3$ &  $2$ \\
\hline
$v_ku_1$&  $2$ &  $1$ \\
\hline
\end{tabular}
\caption{Factors of $f_{\infty;a,b}$ formed when $m=2$}
\label{mis2factors}
\end{table}

This counting has to be done for each of the three factors possible ($vu,uv,vv$)  and hence the values in the `Number of factors' column in Tab. \ref{mis2factors} are to be multiplied by $3$. Now, a few more factors of the same lengths, listed above, will be created by the factors of length $3$ of $f_{\infty; u,v}$ also. For example, from $vuv = \begin{tabular}{|c|c|c|}
\hline
$v_1v_2\cdots v_k$&$u_1u_2\cdots u_k$&$v_1v_2\cdots v_k$\\
\hline
\end{tabular}$\:, we get one factor of length $(k+2)$ (namely, $v_ku_1u_2\cdots u_kv_1$) and two factors of length $(k+3)$ and so on. Note that, for a given $l$ and an appropriate $m$, the factors of length $l$ are (inherently) available  at the beginning of a factor of length $m$ of $f_{\infty;u,v}$ and are available at the middle of a factor of length $(m+1)$ of $f_{\infty;u,v}$.

Extending this counting technique, for an $m \ge 2$, we have,
\begin{center}
\begin{tabular}{|c|c|c|}
\hline
\textbf{Length ($l$) of the} & \textbf{No. of factors created } & \textbf{No. of factors created }   \\
 \textbf{factor of $f_{\infty;a,b}$}& \textbf{by a factor of length $m$} & \textbf{by a factor of length $m+1$} \\
  & \textbf{of $f_{\infty;u,v}$} & \textbf{of $f_{\infty;u,v}$} \\
\hline
$(m-1)k+2$ &  $k-1$ &  $1$ \\
\hline
$(m-1)k+3$ &  $k-2$ &  $2$  \\
\hline
$\cdots$ &  $\cdots$  &  $\cdots$   \\
\hline
$mk$ &  $1$ &  $k-1$ \\
\hline
$mk+1$ &  $0$ &  $k$\\
\hline
\end{tabular}
\end{center}

Now for an $m \ge 2$, as there are $m+1$ factors of length $m$ and $m+2$ factors of length $m+1$ in $f_{\infty;u,v}$, we get the bound for $p_{f_{\infty;a,b}}(l)$ as stated in the theorem. That is, by adding together the number of factors of length $(m-1)k+2 \le l \le mk+1$ that occur in the factors of length $m$ and $m+1$ of $f_{\infty;u,v}$ we get, for $m \ge 2$,   

\begin{table}[h!]
\centering
\begin{tabular}{|c|c|}
\hline
\textbf{Length of the factor ($l$)} & \textbf{Maximum number of factors}  \\
\hline
$(m-1)k+2$ &  $ (k-1)(m+1) + 1(m+2)$ \\
\hline
$(m-1)k+3$ &   $ (k-2) (m+1) + 2(m+2)$ \\
\hline
$\cdots$ &   $\cdots$  \\
\hline
$mk$ &   $ (1)(m+1) + (k-1)(m+2)$ \\
\hline
$mk+1$ &   $ (k)(m+2)$ \\
\hline
\end{tabular}
\caption{Maximum Number of Factors of length $l$ in $f_{\infty;a,b}$}
\label{factorsanym}
\end{table}

Note that some of the factors created by a factor of length $m$ of $f_{\infty;u,v}$ may repeat in the factors created by a factor of length $m+1$ of $f_{\infty;u,v}$. Hence, the total number of factors obtained (i.e. the last column of Tab. \ref{factorsanym}) is, in fact a bound.

Though the proof uses an iterative argument over $m$, in practical situations, when we require the number of factors of a given length $l$, we should fix $m \ge 2$ as the least integer such that $km \ge l$. This is clear from the fact that, a factor of length $l$ of $f_{\infty;a,b}$ will be created by a factor $w$ of $f_{\infty;u,v}$ only when $|w|k \ge l$. 
\end{proof}

\begin{Remark}
    
 Also while obtaining a general formula for $p_{f_{\infty;a,b}}(l)$, the bounds for the cases $l=2,3,\ldots, k-1$ might have been scaled up. But this can be resolved by a simple manipulation. 
\end{Remark}

\begin{Remark}    
The value of the maximum number of factors (in fact the total number of factors) given by Theorem \ref{T3}, in the degenerate case ($k=1$ and $u=a, v= b$) is $m+1$, the factor complexity of the $1D$ infinite Fibonacci word.
\end{Remark}

We note that, achieving the bound given in Theorem \ref{T3} depends on the selection of $u$ and $v$.

\begin{Example}\label{exam6}
Let $u = abaa, v = aaba$ with $k=4$. Then, $f_{\infty;a,b} = aaba|abaa|aaba|aaba|abaa\cdots$. Markers are used for better readability. Let us find the maximum number of factors of length $10$ in $f_{\infty;a,b}$. As $3.k > l$, $m$ is $3$. Thus,  $p_{f_{\infty;a,b}}(10)  \le (k-1)(m+1) + (1)(m+2) = 3.4+ 1.5 = 17$.

Elaborating further, we have the factors of length $3$ of $f_{\infty;u,v}$ as $vuv, uvv, vvu, uvu$. 

Thus, factors of length $10$ of $f_{\infty;a,b}$,

created by $vuv$: $aabaabaaaa, abaabaaaab, baabaaaaba$

created by $uvv$: $abaaaabaaa, baaaabaaab, aaaabaaaba$

created by $vvu$: $aabaaabaab, abaaabaaba, baaabaabaa$

created by $uvu$: $abaaaabaab, baaaabaaba, aaaabaabaa$\\

The factors of length $4$ of $f_{\infty;u,v}$ as $vuvu, vuvv, uvvu, vvuv,uvuv$. 

Thus, factors of length $10$ of $f_{\infty;a,b}$,

created by $vuvu$: $aabaaaabaa$

created by $vuvv$: $aabaaaabaa$

created by $uvvu$: $aaabaaabaa$

created by $vvuv$: $aaabaabaaa$

created by $uvuv$: $aaabaabaaa$

Observe that, as both $u$ and $v$ start and end with the same symbol $a$, some of the factors are repeated. This happens when a factor is created again by a different arrangement of $u$'s and $v$'s. Such a situation happens in this example and hence $p_{f_{\infty;a,b}}(10)= 15$ only. 
\end{Example} 

\begin{Example}\label{exam7}
 Let $u= abbab$ and $v=baaba = \bar{u}$, the complement of $v$. Let us evaluate $p_{f_{\infty;a,b}}(8)$. As $k=5$ and $l=8$, $m$ is $2$. Hence, $p_{f_{\infty;a,b}}(8) \le 17$.  It is easy to check that all the $17$ factors are distinct and the bound is tight for this selection of $u$ and $v$.    
\end{Example}   

\subsection{Location of the Factors}
After counting and enumerating the factors of $f_{\infty;a,b}$ for a given $l \ge 2$, we can now locate the positions of occurrences of these factors in $f_{\infty;a,b}$.

\begin{Theorem}\label{T4}
Let $k \ge 2$ and $|u|=|v|=k$. Let $m$ denote the length of the factors of $f_{\infty;u,v}$ and let $l$ denote the length of the factors of $f_{\infty;a,b}$. For an $m \ge 2$, let $Loc(fac^i_{m}, f_{\infty;u,v})$,$0 \le i \le m$, be the locations of the $i^{th}$ factor of length $m$ of $f_{\infty;u,v}$  in $f_{\infty;u,v}$. Given an $l \ge 2$, consider the least $m \ge 2$ such that $km \ge l$.  Let $Loc(fac^r_{l}, f_{\infty;a,b})$,$0 \le r < (m+1) (mk-l+1) +(m+2) (l -(m-1)k-1)$, be the locations of the $r^{th}$ factor of length $l$ of $f_{\infty;a,b}$  in $f_{\infty;a,b}$.

Then, for $0 \le j \le m, 0 \le j' \le mk -l$, $r = j(mk-l+1)+j' $
\[Loc(fac^{r}_{l}, f_{\infty;a,b}) =   
      \bigcup_{L^j_m}   \left\{(k.L^j_m - (k-1)) + j'\right\}\]
        
        where the union is taken over all $L^j_m \in Loc(fac^j_{m}, f_{\infty;u,v})$; and

   for $0 \le j \le (m+1), 0 \le j' < l - [(m-1)k +1]$, $r=(m+1)(mk-l+1)+j(l - (m-1)k - 1+j'$,
   \[Loc(fac^{r}_{l}, f_{\infty;a,b}) =   \bigcup_{L^{j}_{m+1}}   \left\{k.L^{j}_{m+1} - j'\right\}\]

        where the union is taken over all $L^j_{m+1} \in Loc(fac^j_{m+1}, f_{\infty;u,v})$.

\end{Theorem}

\begin{proof}
From the counting process we used in the proof of Theorem \ref{T3}, it is easy to observe that, for a given $l$, $(mk-l+1)$ factors of length $l$  are created (in $f_{\infty;a,b}$), by each factor of length $m$ of $f_{\infty;u,v}$, and $(l -(m-1)k-1)$ factors of length $l$  are created (in $f_{\infty;a,b}$) by each factor of length $(m+1)$ of $f_{\infty;u,v}$. This explains the range of the index, `$r$' in $Loc(fac^r_{l}, f_{\infty;a,b})$.

$f_{\infty;u,v}$  being the infinite Fibonacci word over $\{u,v\}$, we know the locations of its factors of length $m \ge 1$. Refer (\ref{1stocc}) for the same. Here, as multiplication operations of the locations are involved, after finding the locations of a factor of $f_{\infty;u,v}$, we shift the values by $1$ before using them. Now, as $u=u_1u_2\cdots u_k$ and $v=v_1v_2\cdots v_k$, if a factor of length $m$ of $f_{\infty;u,v}$ (say, $fac_{m}$) is located at position $L_m \ge 1$ in $f_{\infty;u,v}$, then the factors of length $l$ of  $f_{\infty;a,b}$ will occur at positions $k.L_m - (k-1), k.L_m - (k-2), \ldots k.L_m - (k -(mk -l+1))$. And whenever $fac_{m}$ occurs in $f_{\infty;u,v}$, the same set of factors of length $l$ of $f_{\infty;a,b}$ will occur in $f_{\infty;a,b}$. Hence, for specific values of $j,j'$  such that  $0 \le j \le m, 0 \le j' \le mk -l$,   \[\bigcup_{L^j_m}   \left\{(k.L^j_m - (k-1)) + j'\right\}\]
gives the locations of the factor $fac_{l}^{j(mk-l+1)+j'}$. 

Recall that, factors of length $l$ are formed through factors of length $m+1$ (say $fac_{m+1}$) of $f_{\infty;u,v}$ also. As the starting positions of these factors are $k.L_{m+1} - j',  0 \le j' < l - [(m-1)k +1]$, by a similar argument as above, the second part of the result follows.   
\end{proof}

\begin{Remark}
    As remarked earlier all the factors of length $l$ obtained from $fac_{m}$ and $fac_{m+1}$ need not be distinct. In such a scenario, when an already obtained factor is obtained again through different $j,j'$ values, the location sets of the factor can be combined together.
\end{Remark}

\begin{Example}
    Let us use the set up of Example \ref{exam6} and find the locations of the factor  $aabaaaabaa$ of length $10$ of $f_{\infty;a,b}$. This factor is created by the $4$-length factor $vuvu$ of $f_{\infty;u,v}$. By the indexing process we use, this factor is named as $fac_{10}^{12}$ in $f_{\infty;a,b}$. 
    Now from Example \ref{exam5} the locations set of the factor $vuvu$ in $f_{\infty;u,v}$ is, $Loc(fac^4_{4}, f_{\infty;u,v}) = \{4,12,17,25,33,\ldots\}$. Now, using $\bigcup\limits_{L^{j}_{m+1}}   \left\{(k.L^{j}_{m+1} - j'\right\}$ with appropriate values, we have , $Loc(fac^{12}_{10}, f_{\infty;a,b}) = \{16,18,68,100,132,\ldots\}$.
    
\end{Example}

\section{Fibonacci Sequence of \texorpdfstring{$2D$}{} Words}\label{sec_2d_uv_words}
Similar to the Fibonacci sequence of $1D$ words, one can construct a Fibonacci sequence of $2D$ words. We will outline the process here. 

In the development of the Fibonacci sequence of $1D$ words, one might have observed that the sequence can be obtained in two ways. One can first develop the sequence of Fibonacci words $v,vu,vuv,vuvvu,\ldots$ over the alphabet $\{u,v\}$ and thereafter replace $u$ and $v$, respectively by $u_1u_2\cdots u_k$ and $v_1v_2\cdots v_k$. In the second way of construction, we start with the words $u_1u_2\cdots u_k$ and $v_1v_2\cdots v_k$ themselves and concatenate them iteratively in the Fibonacci way to get the sequence of words $v_1v_2\cdots v_k$, $v_1v_2\cdots v_k u_1u_2\cdots u_k$, $v_1v_2\cdots v_k u_1u_2\cdots u_k v_1v_2\cdots v_k$, $\ldots$. 

Similarly a Fibonacci sequence of $2D$ words can be obtained in two ways. First we can develop the sequence of $2D$ Fibonacci words over the alphabet  $\{u,v,w,x\}$, as defined in section \ref{2DFibosection} to get,   

\begin{equation} \label{uvwx}
W_0 = u, \quad W_1 = x, \quad W_2 = \begin{matrix}
x & w\\
v & u
\end{matrix}, \quad W_3 =  \begin{matrix}
x & w & x  \\
v & u & v  \\
x & w & x 
\end{matrix}, \quad W_4 = \begin{matrix}
x & w & x & x & w \\
v & u & v & v & u \\
x & w & x & x & w \\
x & w & x & x & w \\
v & u & v & v & u 
\end{matrix}, \quad \ldots
\end{equation}
and then replace $u,v,w,x$ respectively by $2D$ words over $\{a,b,c,d\}$ of the same size, $(m,n)$. That is, with $u_{i.j},v_{i,j},w_{i,j},x_{i,j} \in \{a,b,c,d\}, 1 \le i \le m, 1 \le j \le n$,
\[u \text{ can be replaced by } \begin{matrix}
u_{1,1} & u_{1,2} & \cdots & u_{1,n} \\
u_{2,1} & u_{2,2} & \cdots & u_{2,n} \\
 & \cdots & \cdots &  \\
u_{m,1} & u_{m,2} & \cdots & u_{m,n} 

\end{matrix}, \quad v \text{ can be replaced by } \begin{matrix}
v_{1,1} & v_{1,2} & \cdots & v_{1,n} \\
v_{2,1} & v_{2,2} & \cdots & v_{2,n} \\
 & \cdots & \cdots &  \\
v_{m,1} & v_{m,2} & \cdots & v_{m,n} 

\end{matrix}, \]

\[w \text{ can be replaced by } \begin{matrix}
w_{1,1} & w_{1,2} & \cdots & w_{1,n} \\
w_{2,1} & w_{2,2} & \cdots & w_{2,n} \\
 & \cdots & \cdots &  \\
w_{m,1} & w_{m,2} & \cdots & w_{m,n} 

\end{matrix}, \quad x \text{ can be replaced by } \begin{matrix}
x_{1,1} & x_{1,2} & \cdots & x_{1,n} \\
x_{2,1} & x_{2,2} & \cdots & x_{2,n} \\
 & \cdots & \cdots &  \\
x_{m,1} & x_{m,2} & \cdots & x_{m,n} 

\end{matrix}. \]

In the other way of construction, initially itself we can take $u,v,w,x$ as $2D$ words of the same size, say, $(m,n)$, and use Definition \ref{DefnApos} with $f_{0,0}= u,f_{0,1}= v,f_{1,0}=w,f_{1,1}=x$ to get the desired sequence of words, $\{W_0,W_1,W_2,W_3,\ldots\}$. 

Note that the sizes of the $2D$ words $u,v,w,x$ all have to be the same for the partial operations $\obar$ and $\ominus$ to be valid. For an easier analysis, similar to what we have assumed in $1D$ setup, we can take $u,v,w,x$ all as square $2D$ words of size $(k,k)$. Then we can easily extend the factor complexity analysis we performed in Section \ref{FacCompFibSeq} to a Fibonacci sequence of $2D$ words.

\begin{Example}
Consider the Fibonacci sequence of $2D$ words as in (\ref{uvwx}). Let $u,v,w,x$ be the $2D$ words as given below. 

$$ u = \begin{matrix}
a & a & b \\
b & b & a \\
b & a & b 
\end{matrix}, \quad v = \begin{matrix}
b & b & a \\
a & a & b \\
a & b & a 
\end{matrix}, \quad w = \begin{matrix}
d & d & c \\
c & c & c \\
d & c & c 
\end{matrix}, \quad x = \begin{matrix}
c & c & d \\
d & d & d \\
c & d & d 
\end{matrix}.$$
Then a few initial words of the Fibonacci sequence of $2D$ words are,
$$
W_0 = \begin{matrix}
a & a & b \\
b & b & a \\
b & a & b 
\end{matrix}, \: W_1 = \begin{matrix}
c & c & d \\
d & d & d \\
c & d & d 
\end{matrix}, \: W_2 =  \begin{matrix}
c & c & d & d & d & c\\
d & d & d & c & c & c\\
c & d & d & d & c & c \\
b & b & a & a & a & b \\
a & a & b & b & b & a \\
a & b & a & b & a & b 
\end{matrix}, \: W_3 =  \begin{matrix}
c & c & d & d & d & c & c & c & d \\
d & d & d & c & c & c & d & d & d \\
c & d & d & d & c & c & c & d & d \\
b & b & a & a & a & b & b & b & a \\
a & a & b & b & b & a & a & a & b \\
a & b & a & b & a & b & a & b & a \\
c & c & d & d & d & c & c & c & d \\
d & d & d & c & c & c & d & d & d \\
c & d & d & d & c & c & c & d & d 
\end{matrix}.$$
\end{Example}

\subsection{Factor Complexity of the Fixed Point}
The fixed point of the above discussed sequence, $W_{\infty,\infty;a,b,c,d}$, can be obtained either directly or from the fixed point, $W_{\infty,\infty;u,v,w,x}$, of the sequence (\ref{uvwx}). 

\begin{Theorem}
    Let $k \ge 2$ and let the sizes of $u,v,w,x$ be $(k,k)$. Let $(m,m')$ denote the size of the factors of $W_{\infty,\infty;u,v,w,x}$ and let $(l,l')$ denote the size of the factors of $W_{\infty,\infty;a,b,c,d}$. Given $l,l' \ge 2$, consider the least $m \ge 2$ such that $km \ge l$ and the least $m' \ge 2$ such that $km' \ge l'$. Then for $l = (m-1)k + i + 1$, $1 \le i \le k$, $m \ge 2$,  and for $l' = (m'-1)k + i' + 1$, $1 \le i' \le k$, $m,m' \ge 2$,  we have 
    \[p_{f_{\infty,\infty;a,b,c,d}}((l,l'))  \le \left[(k-i)(m+1) + (i)(m+2)\right] \left[(k-i')(m'+1) + (i')(m'+2)\right].\]
\end{Theorem}

\begin{proof}
    Before finding the bound for $p_{f_{\infty,\infty;a,b,c,d}}((l,l'))$, $l,l' \ge 2$, observe that every row of $W_{\infty,\infty;a,b,c,d}$ is $f_{\infty;s_1,s_2}$ where $s_1, s_2 \in \{a,b,c,d\}, s_1 \ne s_2$ and there are only $2k$ distinct rows. Similarly every column of $W_{\infty,\infty;a,b,c,d}$, written as a $1D$ Fibonacci word  is $f_{\infty;s_1,s_2}$ where $s_1, s_2 \in \{a,b,c,d\}, s_1 \ne s_2$ and there are only $2k$ distinct columns. This follows from the properties listed in Lemma \ref{lemprop} and the fact that each of $u,v,w,x$ are of size $(k,k)$.

    Given $l,l' \ge 2$, we can find $m \ge 2,m' \ge 
 2$ as the least values such that $l \ge km$, $l' \ge km'$. As the columns of $W_{\infty,\infty;a,b,c,d}$ are $f_{\infty;s_1,s_2}$, in any arbitrary column,  there will be a maximum of $(k-i^0)(m+1) + (i^0)(m+2)$ factors of length $l = (m-1)k+i^0+1$, where $i^0 \in \{1, 2,\ldots, k\}$ (and corresponds to the given $l$). Let us denote this set of factors by `$VF$' and call them `vertical factors'. As the rows of $W_{\infty,\infty;a,b,c,d}$ are $f_{\infty;s_1,s_2}$, in any arbitrary row, there will be a maximum of $(k-i^*)(m'+1) + (i^*)(m'+2)$ factors of length $l' = (m'-1)k+i^*+1$, where $i^* \in \{1, 2,\ldots, k\}$ (and corresponds to the given $l'$). Let us denote this set of factors by `$HF$' and call them `horizontal factors'.  As there are $2k$ distinct columns and $2k$ distinct rows, there will be at the maximum $(2k) ((k-i^0)(m+1) + (i^0)(m+2))$ vertical factors of length $l$ and  $(2k) ((k-i^*)(m+1) + (i^*)(m+2))$ horizontal factors of length $l'$ available in $W_{\infty,\infty;a,b,c,d}$.
 
 Now, let us call the prefix of size $(1,1)$ of a factor (vertical or horizontal) as its head. For any random vertical factor of length $l$, say $VF_I$, available in the $J^{th}$ column of $W_{\infty,\infty;a,b,c,d}$, with its head being positioned in the $I^{th}$ row of $W_{\infty,\infty;a,b,c,d}$, there will be a unique horizontal factor of length $l'$, say $HF_J$, available in the $I^{th}$ row, having its head positioned at the $J^{th}$ column. This argument is similar to the argument used in the proof of Preposition $5$ of \cite{Sivasankar:2023}. Now, $VF_I$ and $HF_J$, having the same head,  will form a $FRAME_{TL}$ with the symbol available at the head becoming \textit{$s_{joint,TL}$}. Similar to the construction used in Lemma \ref{frmframtl}, this $FRAME_{TL}$ can be completed to get a factor of size $(l,l')$ of $W_{\infty,\infty;a,b,c,d}$.  As every vertical factor in $VF$ pairs up with a unique horizontal factor in $HF$, there will be $\left[(k-i^0)(m+1) + (i^0)(m+2)\right] \left[(k-i^*)(m'+1) + (i^*)(m'+2)\right]$ factors of size $(l,l')$. As all these factors being distinct depends on the selection of $u,v,w,x$, we have, 
 \[p_{f_{\infty,\infty;a,b,c,d}}((l,l'))  \le \left[(k-i)(m+1) + (i)(m+2)\right] \left[(k-i')(m'+1) + (i')(m'+2)\right].\]
\end{proof}

\begin{Example}
    Let us extend Example \ref{exam7} here. Let $u,v,w,x$ be any random $2D$ words over $\{a,b,c,d\}$ of size $(5,5)$. Then $p_{f_{\infty,\infty;a,b,c,d}}((8,8)) \le 17.17 = 289$.
\end{Example}

\subsection{Location of the Factors}
The procedure we followed to locate a factor of $f_{\infty;a,b}$ can be extended to locate any factor of $f_{\infty,\infty;a,b,c,d}$. We conclude this section by outlining the steps to perform the same. 

Given a factor `$Fact$' of  $f_{\infty, \infty; a,b,c,d}$ of size $(l,l')$, $l,l' \ge 2$, consider the least $m \ge 2$ such that $km \ge l$ and the least $m' \ge 2$ such that $km' \ge l'$. Let us index all the factors of size $(l,l')$ as $fac_{(l,l')}^{r,r'}$, where $0 \le r < (m+1) (mk-l+1) +(m+2) (l -(m-1)k-1)$ and $0 \le r' < (m'+1) (m'k-l'+1) +(m'+2) (l' -(m'-1)k-1)$. Then using Theorem \ref{T4}, we can find the locations of $FRAME_L$ (i.e. $Loc(fac^{r}_{l}, f_{\infty;s_1,s_2}$,  $s_1 \ne s_2$) in the columns in which $FRAME_L$ occurs as a factor. Similarly, we can find the locations of $FRAME_T$ (i.e. $Loc(fac^{r'}_{l'}, f_{\infty;s_1,s_2}, s_1 \ne s_2$) in the rows in which $FRAME_T$ occurs as a factor. As \textit{$s_{joint,TL}$} occurs at the locations "$Loc(fac^{r}_{l}, f_{\infty;s_1,s_2}) \times Loc(fac^{r'}_{l'}, f_{\infty;s_1,s_2})$", we have,  

\[Loc(fac_{(l,l')}^{r,r'},f_{\infty,\infty;a,b,c,d}) = (Loc(fac^{r}_{l}, f_{\infty;s_1,s_2}),Loc(fac^{r}_{l}, f_{\infty;s_1,s_2})).\]

\section{Concluding Remarks}\label{secconcl}
The knowledge of all the subwords of an infinite word would be very useful to analyse the characteristics of the word. Though any sort of analysis like periodicity, factor complexity is tricky in $2D$ words, $2D$ Fibonacci words with their simple and elegant structure are pliable for exploring their properties. In this paper we have enumerated the subwords of the $2D$ infinite Fibonacci word, $f_{\infty,\infty}$, in a few possible ways. The location of the occurrences of these subwords are also found out.

Suffix tree is an important tool used for pattern matching and dictionary searching \cite{Crochbook:2007, Smyth:2003}. Again, there are some limitations while extending this tool for $2D$ words \cite{Giancarlo:1999}. But the relatively simpler structure of $f_{\infty,\infty}$ may help us to develop one for $2D$ words of similar type. Also, variations attempted in the generation of the Fibonacci sequence \cite{Swain:2017} lead to variants of $1D$\:/\:$2D$ Fibonacci words \cite{Jishe:2011}. We might start exploring these directions. One more compelling direction of work can be towards estimating the factor complexities of  $f_{\infty;a,b}$ ($W_{\infty,\infty;a,b,c,d}$, respectively,) when the length of $u$ and $v$ are not equal in $f_{\infty;u,v}$ (when the sizes of $u$,$v$,$w$,$x$ are not equal in $W_{\infty,\infty;u,v,w,x}$, respectively).

\bibliographystyle{splncs03}
\bibliography{refref}

\end{document}